\def\bint{{\ifinner\rlap{\bf\kern.30em--}
\int\else\rlap{\bf\kern.35em--}\int\fi}\ignorespaces}
\def\sbint{{\ifinner\rlap{\bf\kern.32em--}
\hspace{0.078cm}\int\else\rlap{\bf\kern.45em--}\int\fi}\ignorespaces}
\def\red{\color{red}}
\def\rr{\mathbb{R}}
\def\rn{\mathbb{R}^n}
\def\rnn{\mathbb{R}^{2n}}
\def\nn{\mathbb{N}}
\def\zz{\mathbb{Z}}
\def\ez{\epsilon}
\def\bz{\beta}
\def\wz{\widetilde}
\def\ls{\lesssim}
\def\fz{\infty}
\def\az{\alpha}
\def\cd{{\mathcal D}}
\def\ce{{\mathcal E}}
\def\cf{{\mathcal F}}
\def\ck{{\mathcal K}}
\def\cm{{\mathcal M}}
\def\co{{\mathcal O}}
\def\BMO{\mathop\mathrm{\,BMO\,}}
\def\CMO{\mathop\mathrm{\,CMO\,}}
\def\VMO{\mathop\mathrm{\,VMO\,}}
\def\XMO{\mathop\mathrm{\,XMO\,}}
\def\MMO{\mathop\mathrm{\,MMO\,}}
\def\Lpwa{{L_{w_1}^{p_1}(\rn)}}
\def\Lpwb{{L_{w_2}^{p_2}(\rn)}}
\def\Lpw{{L_w^p(\rn)}}
\def\xMO{\mathop\mathrm{\,X_1MO\,}}
\def\r{\right}
\def\lf{\left}
\def\r{\right}
\def\lf{\left}
\def\supp{{\mathop\mathrm{\,supp\,}}}
\def\loc{{\mathop\mathrm{\,loc\,}}}
\def\BMO{{\mathop\mathrm{\,BMO\,}}}
\def\eqref#1{(\ref{#1})}
\newtheorem{theorem}{Theorem}[section]
\newtheorem{lemma}[theorem]{Lemma}
\newtheorem{corollary}[theorem]{Corollary}
\newtheorem{proposition}[theorem]{Proposition}
\theoremstyle{definition}
\newtheorem{remark}[theorem]{Remark}
\newtheorem{question}[theorem]{Question}
\numberwithin{equation}{section}
\begin{document}
\title{\bf\Large XMO and Weighted Compact Bilinear Commutators
\footnotetext{\hspace{-0.35cm} 2020 {\it
Mathematics Subject Classification}. Primary 42B20; Secondary 42B35, 46E35, 47B47, 47A30. \endgraf
{\it Key words and phrases}. bounded mean oscillation, commutator,
compact bilinear operator, weight.
 \endgraf
This project is supported by the National
Natural Science Foundation of China  (Grant Nos.  11761131002, 11971058, 11671185,
11671039, 11871101 and 11871100).}}
\date{ }
\author{Jin Tao, Qingying Xue,
Dachun Yang\,\footnote{Corresponding
author, E-mail: \texttt{dcyang@bnu.edu.cn} / {\red July 5, 2020} / Final version.} \  and Wen Yuan}
\maketitle

\vspace{-0.8cm}

\begin{center}
\begin{minipage}{13cm}
{\small {\bf Abstract}\quad
To study the compactness of bilinear commutators
of certain bilinear Calder\'on--Zygmund operators
which include (inhomogeneous) Coifman--Meyer bilinear
Fourier multipliers and bilinear pseudodifferential operators
as special examples, Torres and Xue
[Rev. Mat. Iberoam. 36 (2020), 939--956]
introduced a new subspace of
BMO$\,(\mathbb{R}^n)$, denoted by XMO$\,(\mathbb{R}^n)$,
and conjectured that it is just
the space VMO$\,(\mathbb{R}^n)$ introduced by D. Sarason.
In this article, the authors give a negative answer to this conjecture
by establishing an equivalent characterization of XMO$\,(\mathbb{R}^n)$, which further
clarifies that XMO$\,(\mathbb{R}^n)$ is a proper subspace of VMO$\,(\mathbb{R}^n)$.
This equivalent characterization of XMO$\,(\mathbb{R}^n)$ is formally similar to the corresponding
one of CMO$\,(\mathbb{R}^n)$ obtained by A. Uchiyama,
but its proof needs some essential new techniques on dyadic cubes as well as some
exquisite geometrical observations. As an application, the authors also obtain a
weighted compactness result on such bilinear commutators, which optimizes the
corresponding result in the unweighted setting. }
\end{minipage}
\end{center}

\vspace{0.2cm}

\section{Introduction}

In a very recent article \cite{TX19}, to study the compactness of bilinear commutators
of certain bilinear Calder\'on--Zygmund operators
which include (inhomogeneous) Coifman--Meyer bilinear
Fourier multipliers and bilinear pseudodifferential operators
as special examples,
Torres and Xue introduced a new subspace of
BMO$\,(\mathbb{R}^n)$, denoted by XMO$\,(\mathbb{R}^n)$,
and conjectured that it is just
the space VMO$\,(\mathbb{R}^n)$ introduced by Sarason \cite{Sarason75}.
In this article, we give a negative answer to this conjecture
by establishing an equivalent characterization of XMO$\,(\mathbb{R}^n)$, which further
clarifies that XMO$\,(\mathbb{R}^n)$ is a proper subspace of VMO$\,(\mathbb{R}^n)$.
This equivalent characterization of XMO$\,(\mathbb{R}^n)$ is formally similar to the corresponding
one of CMO$\,(\mathbb{R}^n)$ obtained by Uchiyama \cite{Uchiyama78TohokuMathJ},
but its proof needs some essential new techniques on dyadic cubes as well as some
exquisite geometrical observations. As an application, we also obtain a
weighted compactness result on such bilinear commutators, which optimizes the
corresponding result of Torres and Xue \cite{TX19} in the unweighted setting.

In what follows, we use $L_{\rm c}^\fz(\rn)$ to denote the set of all
essentially bounded functions on $\rn$ with compact support.
The theory of commutators of pointwise multiplication with Calder\'on--Zygmund
operators has attracted lots of attentions and many works have been done
since Coifman et al. \cite{CoifmanRochbergWeiss76AnnMath} first studied the
boundedness characterization of the commutator $[b,T]$ which is defined by setting,
for any $f\in L_{\rm c}^\fz(\rn)$,
$$[b,T](f):=bT(f)-T(bf),$$
where  $T$ is any classical Calder\'on--Zygmund operator
with smooth kernel and $b\in\BMO(\rn)$.
Among those achievements are the celebrated boundedness and compactness results of
Coifman et al. \cite{CoifmanRochbergWeiss76AnnMath}, Cordes \cite{CO},
Uchiyama \cite{Uchiyama78TohokuMathJ} and Janson \cite{J} in the linear situation.
In \cite{Uchiyama78TohokuMathJ}, Uchiyama established a
characterization of $\CMO(\rn)$ (see Proposition \ref{CMO-char} below)
and used it to show that, for any given $p\in(1,\fz)$ and any
Calder\'on--Zygmund operator $T$ with rough kernel,
$[b,T]$ is compact on $L^p(\rn)$ if and only if $b$ is in $\CMO(\rn)$,
where $\CMO(\rn)$ denotes the closure in $\BMO(\rn)$ of infinitely differential
functions with compact support.

In the bilinear setting, recall that the boundedness on $L^p(\rn)$ of
the commutators of more general bilinear Calder\'on--Zygmund operators
with $b\in \BMO(\rn)$ was established by P\'erez and Torres \cite{PT}
for any given $p\in (1,\fz)$, and by Tang \cite{Tang} and Lerner et al. \cite{LOPTT09AM}
for any given $p\in (1/2, 1]$. The compactness in $L^p(\rn)$ of the commutators multiplying
functions in $\CMO(\rn)$ was demonstrated by  B\'enyi and Torres \cite{BT}
for any given $p\in (1,\fz)$, and by Torres et al. \cite{TXY} for any given $p\in (1/2, 1]$.
Moreover, Chaffee et al. \cite{CCHTW} showed that the compactness result
for certain homogeneous bilinear Calder\'on--Zygmund operators holds true
if and only if $b\in \CMO(\rn)$;
see also Remark \ref{compact-rem}(iv) below.
For more related works, we refer the reader to
\cite{CDLW19,DMX,H1,LNWW20,L20} and their references.

In order to investigate the possible versions in the bilinear setting of the
compactness result of Cordes \cite{CO}, Torres and  Xue in \cite{TX19} uncovered
two subspaces of $\BMO(\rn)$, which were denoted, respectively,
by $\MMO(\rn)$ and $\XMO(\rn)$. It is known that
$$\CMO(\rn) \subsetneqq \MMO(\rn) \subsetneqq \XMO(\rn) \subset \VMO(\rn),$$
where $\VMO(\rn) \subsetneqq \BMO(\rn)$ denotes the space of functions with
``vanishing mean oscillation".
The main results in \cite{TX19}  state that the compactness result still holds true
for the commutators of pointwise multiplication with certain bilinear Calder\'on--Zygmund
operators whenever $b\in \XMO(\rn).$ This means, of course, for the compactness of
these commutators, $b$ does not need to be in $\CMO(\rn)$.
It still works in a larger subspace $\XMO(\rn)$.

In what follows, let $\mathbb{N}:=\{1,\,2,...\}$, $\mathbb{Z}_+:=\mathbb{N}\cup\{0\}$,
$\mathbb{Z}_+^n:=(\mathbb{Z}_+)^n$ and $\mathbb{Z}_+^{3n}:=(\mathbb{Z}_+)^{3n}$.
In this article, we consider the following particular type
bilinear Calder\'on--Zygmund operator $T$, whose kernel $K$ satisfies
\begin{itemize}
\item[{\rm(i)}]The standard \emph{size} and \emph{regularity} conditions:
for any given multi-indices $\az:=(\az_1,\ldots,\az_{3n})\in\zz_+^{3n}$ with
$|\az|:=\az_1+\cdots+\az_{3n}\le1$, there exists a positive constant
$C_{(\az)}$, depending on $\az$, such that,
for any $x,\ y,\ z\in\rn$ with $x\neq y$ or $x\neq z$,
\begin{align}\label{sizeregular}
|D^\az K(x,y,z)|\le C_{(\az)} (|x-y|+|x-z|)^{-2n-|\az|}.
\end{align}
Here and thereafter,
$D^\az:=(\frac\partial{\partial x_1})^{\az_1}\cdots
 (\frac\partial{\partial x_{3n}})^{\az_{3n}}$.

\item[{\rm(ii)}]The additional decay condition:
there exists a positive constant $C$ such that,
for any $x,\ y,\ z\in\rn$ with $|x-y|+|x-z|>1$,
\begin{align}\label{decay}
|K(x,y,z)|\le C (|x-y|+|x-z|)^{-2n-2}
\end{align}
\end{itemize}
and, for any $f,\ g\in L_{\rm c}^\fz(\rn)$ and $x\notin\supp(f)\cap\supp(g)$,
$T$ is supposed to have the following usual representation:
\begin{align*}
T(f,g)(x)=\int_{\rnn}K(x,y,z)f(y)g(z)\,dy\,dz,
\end{align*}
here and thereafter, $\supp(f):=\{x\in\rn:\ f(x)\neq 0\}$.
The (inhomogeneous) Coifman--Meyer bilinear
Fourier multipliers and the bilinear pseudodifferential operators with certain symbols
satisfy the above conditions (see, for instance, \cite{TX19}).
Therefore, they are typical examples of the bilinear
Calder\'on--Zygmund operators as above.
We refer the reader also to \cite{BT03,BD13,CM78,GT02,GMNT,GMN,TX19}
for the boundedness and more history of multilinear Fourier multipliers
and pseudodifferential operators.
The original motivation of \cite{TX19} is to prove that, if the kernel
of the modified Calder\'on--Zygmund operator $T$ in the considered commutator
$[b,T]$ has some better decay properties than the classical one,
then  $[b,T]$ should be compact for $b$ being in a larger subspace of $\BMO(\rn)$
than $\CMO(\rn)$, which indeed proved true in \cite{TX19}.

Recall that the \emph{bilinear commutators} with single entries are
defined by setting, for any $f,\ g\in L_{\rm c}^\fz(\rn)$ and
$x\notin\supp(f)\cap\supp(g)$,
\begin{equation}\label{c1}
[b,T]_1(f,g)(x):=\lf(bT(f,g)-T(bf,g)\r)(x)
=\int_{\rnn}[b(x)-b(y)]K(x,y,z)f(y)g(z)\,dy\,dz
\end{equation}
and
\begin{equation}\label{c2}
[b,T]_2(f,g)(x):=\lf(bT(f,g)-T(bf,g)\r)(x)
=\int_{\rnn}[b(x)-b(z)]K(x,y,z)f(y)g(z)\,dy\,dz.
\end{equation}

We now need to introduce several subspaces of the space $\BMO(\rn)$.
Recall that
$$\CMO(\rn):=\overline{C_{\rm c}^\fz(\rn)\cap\BMO(\rn)}^{\BMO(\rn)}$$
and
$$\VMO(\rn):=\overline{C_{\rm u}(\rn)\cap\BMO(\rn)}^{\BMO(\rn)},$$
where $C_{\rm c}^\fz(\rn)$ denotes the set of all smooth functions on $\rn$ with
compact support and $C_{\rm u}(\rn)$ the set of all functions on $\rn$ with
uniform continuity. Here and thereafter, $\overline{\mathcal X}^{\BMO(\rn)}$
denotes the closure in $\BMO(\rn)$ of the set $\mathcal X$.

In what follows, we use $\vec 0_n$ to denote the origin of $\rn$
and, for any $\az:=(\az_1,\ldots,\az_n)\in\zz_+^n$, we let
$D^\az:=(\frac\partial{\partial x_1})^{\az_1}\cdots(\frac\partial{\partial x_n})^{\az_n}$.
We also use $C^\fz(\rn)$ to denote the set of all infinitely differentiable functions on $\rn$ and
$L^\fz(\rn)$ the set of all essentially bounded functions on $\rn$.
The spaces $\MMO(\rn)$ and $\XMO(\rn)$ in \cite{TX19}
were defined in the way that
$$\MMO(\rn):=\overline{A_\fz(\rn)}^{\BMO(\rn)},$$
where
$$A_\fz(\rn):=\lf\{b\in C^\fz(\rn)\cap L^\fz(\rn):
\,\,\forall\ \az\in\zz_+^n\setminus\{\vec 0_n\},
\lim_{|x|\to\fz}D^\az b(x)=0\r\},$$
and
$$\XMO(\rn):=\overline{B_\fz(\rn)}^{\BMO(\rn)},$$
where
$$B_\fz(\rn):=\lf\{b\in C^\fz(\rn)\cap\BMO(\rn):
\,\,\forall\ \az\in\zz_+^n\setminus\{\vec 0_n\},
\lim_{|x|\to\fz}D^\az b(x)=0\r\}.$$

Furthermore, we use the following set
$$B_1(\rn):=\lf\{b\in C^1(\rn)\cap\BMO(\rn):\,\,
\lim_{|x|\to\fz}|\nabla b(x)|=0\r\}$$
to define
$$\xMO(\rn):=\overline{B_1(\rn)}^{\BMO(\rn)},$$
where $C^1(\rn)$ denotes the set of all functions $f$ on $\rn$ whose gradients
$\nabla f:=(\frac{\partial f}{\partial x_1},\dots,\frac{\partial f}{\partial x_n})$
are continuous.
By the observation
$C_{\rm c}^\fz(\rn)\subset B_\fz(\rn)\subset B_1(\rn)\subset C_{\rm u}(\rn)$,
we easily conclude that
$$\CMO(\rn)\subset\XMO(\rn)\subset\xMO(\rn)\subset\VMO(\rn).$$
Moreover, it was shown in \cite{TX19} that
$$\CMO(\rn)\subsetneqq\MMO(\rn)\subsetneqq\XMO(\rn).$$
Meanwhile, an open question was posed by Torres and  Xue in \cite{TX19} as follows:

\begin{question}\label{question}
Which one of the following two possibilities
$$\XMO(\rn)\subsetneqq \VMO(\rn)\quad \mathrm{or}\quad \XMO(\rn)= \VMO(\rn)$$
holds true?
\end{question}

Torres and  Xue in \cite{TX19} conjectured that the latter
might be true. However, in this article, we show that the relationship
$\XMO(\rn)\subsetneqq \VMO(\rn)$ holds true, which gives a complete answer to
Question \ref{question}. Indeed, we have
$$\CMO(\rn)\subsetneqq\XMO(\rn)=\xMO(\rn)\subsetneqq\VMO(\rn),$$
where $\XMO(\rn)\supseteqq\xMO(\rn)$ is quite surprising.
To show this, we establish the following equivalent characterization,
which is the first main result of this article.
In what follows, the {symbol} $a\to0^+$ means that $a\in(0,\fz)$ and $a\to0$;
the {symbol} $Q$ means a cube that $Q$ has finite side length,
all its sides parallel to the coordinate axes,
but $Q$ is not necessary to be open or closed, and $Q+x:=\{y+x:\ y\in Q\}$ for any $x\in\rn$;
for any cube $Q\subsetneqq\rn$ and $f\in L_{\loc}^1(\rn)$
(the set of all locally integrable functions),
the \emph{mean oscillation} $\co(f;Q)$ is defined by setting
$$\co(f;Q):=\frac1{|Q|}\int_Q\lf|f(x)-\frac1{|Q|}\int_Q f(y)\,dy\r|\,dx.$$

\begin{theorem}\label{xMO-char}
The following statements are mutually equivalent:
\begin{itemize}
\item [{\rm(i)}] $f\in\xMO(\rn)$;
\item [{\rm (ii)}] $f\in\BMO(\rn)$ and enjoys the properties that
\begin{itemize}
\item [{\rm (ii)$_1$}] $$\lim_{a\to0^+}\sup_{|Q|=a}\co(f; Q)=0;$$
\item [{\rm (ii)$_2$}]for any cube $Q\subset\rn$, $$\lim_{|x|\to\fz}\co(f; Q+x)=0.$$
\end{itemize}
\item [{\rm (iii)}] $f\in\XMO(\rn)$.
\end{itemize}
\end{theorem}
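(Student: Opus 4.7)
I close the cycle (iii)$\Rightarrow$(i)$\Rightarrow$(ii)$\Rightarrow$(iii). The first implication is trivial since $B_\fz(\rn)\subset B_1(\rn)$, hence $\XMO(\rn)\subset\xMO(\rn)$. For (i)$\Rightarrow$(ii), I first check that every $g\in B_1(\rn)$ satisfies (ii)$_1$ and (ii)$_2$ directly: continuity of $\nabla g$ together with $\lim_{|x|\to\fz}|\nabla g(x)|=0$ forces $\nabla g\in L^\fz(\rn)$, and the Lipschitz-type bound
$$\co(g;Q)\le\sqrt{n}\,|Q|^{1/n}\,\|\nabla g\|_{L^\fz(Q)}$$
gives (ii)$_1$ on letting $|Q|\to 0$ and (ii)$_2$ on replacing $Q$ by $Q+x$ and letting $|x|\to\fz$. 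Both properties are closed under BMO-convergence via the triangle inequality $\co(f;Q)\le\|f-f_n\|_\BMO+\co(f_n;Q)$, so they pass from $B_1(\rn)$ to its closure $\xMO(\rn)$.

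The core implication (ii)$\Rightarrow$(iii) I would prove by mollification. Fix a nonnegative $\phi\in C_{\rm c}^\fz(\rn)$ with $\supp\phi\subset B(\vec 0_n,1)$ and $\int_\rn\phi=1$, set $\phi_\ez(y):=\ez^{-n}\phi(y/\ez)$, and put $f_\ez:=f*\phi_\ez$. The two tasks are (a) showing $f_\ez\in B_\fz(\rn)$, and (b) showing $\|f-f_\ez\|_\BMO\to 0$ as $\ez\to 0^+$. For (a), $f_\ez$ is obviously smooth with $\|f_\ez\|_\BMO\le\|\phi\|_{L^1(\rn)}\|f\|_\BMO$; for the vanishing of its derivatives I use the cancellation $\int_\rn D^\az\phi_\ez=0$ (valid for every multi-index $\az$ with $|\az|\ge 1$) to write
$$D^\az f_\ez(x)=\int_\rn\lf[f(x-y)-f_{B(x,\ez)}\r]D^\az\phi_\ez(y)\,dy,$$
which yields $|D^\az f_\ez(x)|\ls\ez^{-|\az|}\,\co(f;B(x,\ez))$; since $B(x,\ez)$ is comparable in size to the cube $x+[-\ez,\ez]^n$, condition (ii)$_2$ forces $D^\az f_\ez(x)\to 0$ as $|x|\to\fz$. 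For (b), condition (ii)$_1$ is precisely Sarason's characterization of $\VMO(\rn)$, so $f=\lim_{n\to\fz}g_n$ in BMO for some $g_n\in C_{\rm u}(\rn)\cap\BMO(\rn)$, and then
$$\|f-f_\ez\|_\BMO\le(1+\|\phi\|_{L^1(\rn)})\|f-g_n\|_\BMO+2\|g_n-g_n*\phi_\ez\|_{L^\fz(\rn)};$$
the $L^\fz$-term vanishes as $\ez\to 0^+$ by uniform continuity of $g_n$, so choosing $n$ large and then $\ez$ small closes the cycle.

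The hardest part is step (a): I must drive \emph{every} higher-order derivative (not merely the gradient) of the mollified function to zero at spatial infinity. This works only because the full cancellation $\int_\rn D^\az\phi_\ez=0$ is available for every $|\az|\ge 1$, and then one must convert the resulting ball-oscillation into a cube-oscillation to which (ii)$_2$ applies. The authors' mention in the abstract of ``essential new techniques on dyadic cubes'' and ``exquisite geometrical observations'' suggests that their actual argument is more structural than the clean mollification blueprint above---perhaps assembling the approximant through a dyadic partition or averaging combined with a carefully scale-matched smoothing, so that (ii)$_2$ is propagated uniformly across all scales without appealing to Sarason's theorem---but the sketch above already exposes the essential interplay between the small-scale regularity (ii)$_1$ and the large-scale oscillation decay (ii)$_2$ that any proof must leverage.
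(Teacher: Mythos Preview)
Your proof is correct and takes a genuinely different---and substantially simpler---route than the paper's for the key implication (ii)$\Rightarrow$(iii).

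The paper does not mollify $f$ directly. Instead, it builds a dyadic family $\mathcal{F}$ of cubes whose side lengths grow as one moves away from the origin (calibrated so that the oscillation of $f$ on each cube is at most $2^{kj(\ez;0)}\ez$), defines a piecewise-constant approximant $g_\ez$ by averaging $f$ on these cubes, and then sets $h_\ez:=g_\ez*\varphi_{2^{j(\ez;0)}}$. Showing $\|f-g_\ez\|_{\BMO}\lesssim\ez$ and $\|g_\ez-h_\ez\|_{\BMO}\lesssim\ez$ requires case analysis on how an arbitrary cube $Q$ intersects the family $\mathcal{F}$. To prove $h_\ez\in B_\fz(\rn)$, the paper exploits the \emph{evenness} of $\varphi$ to get $\int D^\az\varphi=0$ only for $|\az|$ odd, handles the odd-order derivatives directly, and then recovers the even-order derivatives via a Taylor-remainder and AM--GM interpolation between orders $|\az|-1$ and $|\az|+1$.

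Your observation that $\int_{\rn}D^\az\phi_\ez=0$ for \emph{every} $|\az|\ge1$ (by compact support and integration by parts, no parity needed) short-circuits this entire mechanism: it lets you bound $|D^\az f_\ez(x)|\lesssim\ez^{-|\az|}\,\co(f;B(x,\ez))$ uniformly in $\az$, and then (ii)$_2$ finishes immediately. Your use of Sarason's theorem for the approximation step (b) is also cleaner than the paper's explicit dyadic bookkeeping. What the paper's construction buys is an explicit, scale-graded approximant---potentially useful if one wanted quantitative control of how the approximation depends on the rates in (ii)$_1$ and (ii)$_2$---but for the bare equivalence your argument is both shorter and conceptually more transparent.
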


As a consequence of Theorem \ref{xMO-char}, we have the following conclusion.
\begin{corollary}\label{xMO-coro}
$\xMO(\rn)=\XMO(\rn)\subsetneqq\VMO(\rn)$.
\end{corollary}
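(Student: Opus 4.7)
The equality $\xMO(\rn) = \XMO(\rn)$ is immediate from the equivalence of assertions (i) and (iii) in Theorem \ref{xMO-char}, so nothing more needs to be done there. Moreover, the inclusion $\XMO(\rn) \subseteq \VMO(\rn)$ has already been established in the excerpt via the chain $C_{\rm c}^\fz(\rn) \subset B_\fz(\rn) \subset B_1(\rn) \subset C_{\rm u}(\rn)$ and closing in $\BMO(\rn)$. Hence the only remaining task is to verify that this inclusion is strict by exhibiting some function in $\VMO(\rn)\setminus\XMO(\rn)$.

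For this, my plan is to produce a simple, concrete witness, namely $f(x) := \sin(x_1)$ for $x = (x_1, \ldots, x_n) \in \rn$. Because $f$ is bounded and Lipschitz, it belongs to $C_{\rm u}(\rn) \cap \BMO(\rn)$ and therefore lies in $\VMO(\rn)$ trivially. To show $f \notin \XMO(\rn)$, I would invoke Theorem \ref{xMO-char} and verify that condition (ii)$_2$ fails for $f$. Choose the cube $Q := [0, 2\pi]^n$ and observe that $\co(f; Q) = c_0$ for some positive constant $c_0$; this reduces to the one-dimensional calculation that $\sin$ has strictly positive mean deviation on $[0, 2\pi]$, which is routine. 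For the translations $x_k := 2\pi k e_1$ with $k \in \nn$, the $2\pi$-periodicity of $\sin$ in its single argument yields $\co(f; Q + x_k) = c_0 > 0$, while $|x_k| \to \fz$ as $k \to \fz$. Thus $\lim_{|x|\to\fz} \co(f; Q+x)$ cannot be $0$, so $f \notin \XMO(\rn)$ by Theorem \ref{xMO-char}, and strictness follows.

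There is no substantive obstacle here; Theorem \ref{xMO-char} does all the conceptual heavy lifting. The content of the corollary is that the characterization provided by the theorem makes the separation between $\XMO(\rn)$ and $\VMO(\rn)$ transparent: $\XMO(\rn)$ insists on the decay-at-spatial-infinity condition (ii)$_2$, which $\VMO(\rn)$ does not require, and any uniformly continuous function that remains suitably oscillatory at infinity, such as $\sin(x_1)$, witnesses the gap.
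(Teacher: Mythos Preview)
Your proposal is correct and follows essentially the same approach as the paper: both invoke Theorem \ref{xMO-char} for the equality and then exhibit a bounded, uniformly continuous periodic function whose mean oscillation on translated cubes does not vanish, thereby violating (ii)$_2$. The only cosmetic difference is the choice of witness---the paper uses $f(x)=\prod_{k=1}^n\sin(x_k)$ with cubes $Q_k=2k\pi+[-\frac\pi2,\frac\pi2]^n$, while you use the simpler $f(x)=\sin(x_1)$ with $Q=[0,2\pi]^n$ translated along $e_1$---but the underlying idea and computation are the same.
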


Thus, Corollary \ref{xMO-coro} completely answers the open question
asked by Torres and  Xue in \cite{TX19}.

In order to state another main result of this article,
we need to introduce a class of multiple weights.
Recall that, usually, a non-negative measurable function $w$ on $\rn$
is called a \emph{weight} on $\rn$.
For any given $\textbf{p}:=(p_1,p_2)\in(1,\fz)\times(1,\fz)$,
let $p$ satisfy $\frac1p=\frac1{p_1}+\frac1{p_2}$.
Following \cite{BDMT15},  we call $\textbf{w}:=(w_1,w_2)$ a
\emph{vector} $\textbf{A}_{\textbf{p}}(\rn)$ \emph{weight},
denoted by $\textbf{w}:=(w_1,w_2)\in \textbf{A}_{\textbf{p}}(\rn)$,
if
$$[\textbf{w}]_{\textbf{A}_{\textbf{p}}(\rn)}:=
\sup_Q\lf[\frac 1{|Q|}\int_Q w(x)\,dx\r]\lf\{\frac 1{|Q|}
\int_Q \lf[w_1(x)\r]^{1-p_1'}\,dx\r\}^{\frac p{p_1'}}
\lf\{\frac 1{|Q|}\int_Q \lf[w_2(x)\r]^{1-p_2'}\,dx\r\}^{\frac p{p_2'}}<\fz,$$
where $w:=w_1^{p/p_1}w_2^{p/p_2}$,
$\frac 1{p_1}+\frac 1{p_1'}=1=\frac 1{p_2}+\frac 1{p_2'}$
and the supremum is taken over all cubes $Q$ of $\rn$.
In what follows, for any given weight $w$ on $\rn$
and measurable subset $E\subseteqq\rn$,
the symbol $L^p_w(E)$ denotes the set of all measurable
functions $f$ on $E$ such that
$$\|f\|_{L^p_w(E)}:=\lf[\int_E |f(x)|^p w(x)\,dx\r]^\frac1p<\fz.$$

Now, we state our second main result of this article
on an application of $\XMO(\rn)$ as follows.

\begin{theorem}\label{compact-thm}
Let $\textbf{p}:=(p_1,p_2)\in(1,\fz)\times(1,\fz)$,
$p\in(\frac12,\fz)$ with $\frac1p=\frac1{p_1}+\frac1{p_2}$,
$\textbf{w}:=(w_1,w_2)\in \textbf{A}_{\textbf{p}}(\rn)$,
$w:=w_1^{p/p_1}w_2^{p/p_2}$, $b\in\XMO(\rn)$ and
$T$ be a bilinear Calder\'on--Zygmund operator whose kernel satisfies
\eqref{sizeregular} and \eqref{decay}. Then, for any $i\in\{1,2\}$,
the bilinear commutator $[b,T]_i$ as in \eqref{c1} or \eqref{c2}
is compact from $\Lpwa\times\Lpwb$ to $\Lpw$.
\end{theorem}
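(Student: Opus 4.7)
The strategy is to reduce the problem to $b\in B_1(\rn)$ via our characterization and then apply a weighted version of the Fr\'echet--Kolmogorov--Riesz compactness criterion. First, I would invoke Theorem \ref{xMO-char}, which gives $\XMO(\rn)=\xMO(\rn)=\overline{B_1(\rn)}^{\BMO(\rn)}$, so any $b\in\XMO(\rn)$ is the BMO limit of a sequence $\{b_k\}_{k\in\nn}\subset B_1(\rn)$. Combined with the known weighted boundedness of bilinear commutators with BMO symbols under vector $\textbf{A}_{\textbf{p}}(\rn)$ weights (of Lerner et al.\ \cite{LOPTT09AM} together with the commutator estimates from \cite{BDMT15,PT}), one obtains the operator-norm inequality
\begin{align*}
\|[b-b_k,T]_i\|_{\Lpwa\times\Lpwb\to\Lpw}\ls\|b-b_k\|_{\BMO(\rn)}\to0.
\end{align*}
Since the compact bilinear operators form a closed subspace of the bounded ones, it suffices to establish the compactness of $[b,T]_i$ whenever $b\in B_1(\rn)$.

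Next, I would apply a weighted Fr\'echet--Kolmogorov--Riesz criterion in $\Lpw$: a bounded subset $\mathcal K\subset\Lpw$ is precompact if and only if (a) it is uniformly bounded, (b) it is equicontinuous in the mean, namely
$\sup_{h\in\mathcal K}\|h(\cdot+t)-h(\cdot)\|_{\Lpw}\to0$ as $|t|\to0$, and (c) it is uniformly tight, namely $\sup_{h\in\mathcal K}\|h\|_{L_w^p(\{|x|>R\})}\to0$ as $R\to\fz$. (For $p\in(\frac12,1]$, $\Lpw$ is a quasi-Banach space, but the classical Kolmogorov-type arguments still apply; this can be found in the standard references used in \cite{BT,TXY,CCHTW}.) I would then verify (a), (b), and (c) for the set
\begin{align*}
\mathcal K_i:=\lf\{[b,T]_i(f,g):\ \|f\|_{\Lpwa}\le1,\ \|g\|_{\Lpwb}\le1\r\}.
\end{align*}
Condition (a) is immediate from the weighted boundedness of $[b,T]_i$. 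For (b), I would split the commutator into an ``inner" part (where $|x-y|+|x-z|$ is small) and an ``outer" part, and use the $C^1$-smoothness of $b$ (so $b(x)-b(y)$ has a first-order Taylor bound involving $\nabla b$) together with the regularity estimate on $K$ from \eqref{sizeregular} to produce quantitative translation-continuity estimates; on the remaining part, the classical H\"older-type control for Calder\'on--Zygmund kernels applies, exactly as in \cite{BT,TXY}, but now upgraded to weighted norms via the weighted bounds for the associated bilinear maximal operator.

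The main obstacle is condition (c): without the $L^\fz$-bound of $A_\fz(\rn)$ used in \cite{TX19}, one cannot trivially dominate $b(x)-b(y)$. Here the improved decay \eqref{decay} of $K$ and the condition $\lim_{|x|\to\fz}|\nabla b(x)|=0$ from $B_1(\rn)$ are crucial. I would write, for $b\in B_1(\rn)$,
\begin{align*}
b(x)-b(y)=\int_0^1\nabla b(y+t(x-y))\cdot(x-y)\,dt,
\end{align*}
so that the factor $|x-y|$ kills one unit of the $-2n-2$ decay in \eqref{decay}, leaving a kernel of order $(|x-y|+|x-z|)^{-2n-1}$, while $|\nabla b|$ becomes small on the region relevant when $|x|$ is large. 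After truncating $f,g$ to fixed large balls (an easy error using the $\textbf A_{\textbf p}$ machinery), the integrand involves $\nabla b$ evaluated at points that must go to infinity; this yields the required uniform tightness. Piecing these three verifications together proves the compactness of $[b,T]_i$ for $b\in B_1(\rn)$, and the reduction step then gives the theorem in full generality, including the quasi-Banach range $p\in(\frac12,1]$.
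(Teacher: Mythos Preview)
Your reduction to $b\in B_1(\rn)$ via Theorem~\ref{xMO-char} and the appeal to a weighted Fr\'echet--Kolmogorov criterion match the paper's strategy exactly. However, your verification of condition~(c), uniform tightness, contains a genuine gap. You propose to ``truncat[e] $f,g$ to fixed large balls (an easy error using the $\textbf{A}_{\textbf{p}}$ machinery),'' but this error is \emph{not} uniformly small over the unit ball of $\Lpwa\times\Lpwb$: that ball contains pairs supported arbitrarily far from the origin, so $\|f-f\mathbf{1}_{B(\vec 0_n,M)}\|_{\Lpwa}$ cannot be made small uniformly in $f$. Even setting this aside, your claim that ``$\nabla b$ [is] evaluated at points that must go to infinity'' is false as stated: with $\xi$ on the segment $\overline{xy}$ and $y$ confined to a fixed ball, $\xi$ can remain bounded no matter how large $|x|$ is.

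The paper avoids both pitfalls by a smooth three-way splitting of the \emph{kernel} rather than the inputs. Writing $1=\varphi_1+\varphi_2+\varphi_3$ on $[0,\infty)$ with $\supp\varphi_1\subset[0,2]$, $\supp\varphi_2\subset[1,\tfrac{A}{2}+1]$, $\supp\varphi_3\subset[\tfrac{A}{2},\infty)$ (applied to $|x-y|+|x-z|$), one splits $[b,T_\eta]_1(f,g)(x)$ accordingly. On the first two pieces the constraint $|x-y|+|x-z|\le A/2$ forces $|\xi|\ge|x|-|x-y|>A/2$, so $|\nabla b(\xi)|$ is genuinely small; on the third piece $|\nabla b|$ is merely bounded, but the extra decay \eqref{decay} yields a bilinear Calder\'on--Zygmund kernel with constant $O(A^{-1})$. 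The crucial point --- and the reason the paper stresses it --- is that each of the three pieces is dominated by a bona fide bilinear Calder\'on--Zygmund operator whose kernel constant is tracked explicitly, so the weighted bounds of \cite{LOPTT09AM} apply directly; the translation-invariant pointwise dominations used in the unweighted argument of \cite{TX19} do not transfer to general $A_p$ weights. You should also note that the paper first passes from $T$ to a smoothly truncated $T_\eta$ (Lemma~\ref{T-Teta}, with error controlled by $\eta\,\|\nabla b\|_{L^\infty}\cm(f,g)$), which removes the diagonal singularity and makes the equicontinuity check routine; your direct approach to condition~(b) is plausible but would require essentially reproducing that truncation inside the argument.
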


\begin{remark}\label{compact-rem}
We have the following comments towards the conclusions of Theorem \ref{compact-thm}.
\begin{itemize}
\item[{\rm(i)}] Although we state and prove Theorem \ref{compact-thm}
in bilinear case, indeed this theorem can be extended to linear
or multilinear case with notational complications and usual modifications.
For instance, if $b\in\XMO(\rn)$ and $T$ is a linear Calder\'on--Zygmund operator
whose kernel $K$ satisfies that, for any given
$\az:=(\az_1,\dots,\az_{2n})\in\zz_+^{2n}$ with
$|\az|:=\az_1+\cdots+\az_{2n}\le1$,
and any $x,\ y\in\rn$,
$$|D^\az K(x,y)|\le C_{(\az)} |x-y|^{-n-|\az|}$$
and, for any $x,\ y\in\rn$ with $|x-y|\ge1$,
$$|K(x,y)|\le C |x-y|^{-n-2},$$
where $D^\az:=(\frac\partial{\partial x_1})^{\az_1}\cdots
 (\frac\partial{\partial x_{2n}})^{\az_{2n}}$,
$C_{(\az)}$ and $C$ are some positive constants,
then $[b,T]$ is compact on $L^p_w(\rn)$ for any given $p\in(1,\fz)$ and $w\in A_p(\rn)$.
Furthermore, observe that the proof of Theorem \ref{compact-thm} mainly depends on
the boundedness of Calder\'on--Zygmund operators and the Hardy--Littlewood maximal operators.
Therefore, Theorem \ref{compact-thm} can also be extended to Morrey spaces;
see, for instance, \cite{TYY19}.

\item[{\rm(ii)}] The corresponding compactness result in
\cite[Theorem 1.1]{TX19} requires
the kernel $K$ satisfying both \eqref{sizeregular} and the following additional estimates:
for any given $\az\in\zz_+^{3n}$, with $|\az|\le1$, and for any given $N\in\{1,2,3\}$,
there exists a positive constant $C_{(\az, N)}$, depending on $\az$ and $N$, such that,
for any $|x-y|+|x-z|>1$,
\begin{align}\label{N=123}
|D^\az K(x,y,z)|\le C_{(\az, N)}(|x-y|+|x-z|)^{-2n-N}.
\end{align}
But, our assumption \eqref{decay} in Theorem \ref{compact-thm} only needs
$\az=\vec0_{3n}$ and $N=2$ in \eqref{N=123}.
Thus, in this sense, even the \emph{unweighted case} of Theorem \ref{compact-thm} also
optimizes and hence improves the corresponding result in \cite{TX19}.

\item[{\rm(iii)}]
B\'enyi et al. \cite[Theorem 1.1]{BDMT15} obtained the compactness of
weighted compact bilinear operators via $\CMO(\rn)$, which states that,
if $b\in\CMO(\rn)$ and $T$ is a bilinear Calder\'on--Zygmund operator
whose kernel $K$ satisfies \eqref{sizeregular},
then the bilinear commutators $\{[b,T]_i\}_{i=1}^2$
are compact from $\Lpwa\times\Lpwb$ to $\Lpw$.
From this and Proposition \ref{CMO-char} below, we deduce that
\begin{align}\label{1+3}
\begin{cases}
T {\rm\ satisfies\ } \eqref{sizeregular}\\
b {\rm\ satisfies\ (i),\ (ii)\ and\ (iii)\ of\ Proposition\ \ref{CMO-char}}
\end{cases}
\Longrightarrow
\{[b,T]_i\}_{i=1}^2\ {\rm are\ compact}.
\end{align}
On the other hand, by Theorems \ref{xMO-char} and \ref{compact-thm},
and Proposition \ref{CMO-char} below, we conclude that
\begin{align}\label{2+2}
\begin{cases}
T {\rm\ satisfies\ \eqref{sizeregular}{\rm\ and\ }\eqref{decay}}\\
b {\rm\ satisfies\ (i)\ and\ (ii)\ of\ Proposition\ \ref{CMO-char}}
\end{cases}
\Longrightarrow
\{[b,T]_i\}_{i=1}^2\ {\rm are\ compact}.
\end{align}
Therefore, in \eqref{1+3},
if we make an additional assumption \eqref{decay} on $T$,
and drop the condition (iii) of Proposition \ref{CMO-char} on $b$,
then it coincides with \eqref{2+2}. This is harmonious and reasonable.

Besides, \cite[Theorem 1.1]{BDMT15} requires $p:=\frac{p_1 p_2}{p_1+p_2}>1$
because they used the weighted Frech\'et--Kolmogorov theorem on $L^p_w(\rn)$
with $p\in(1,\fz)$. However, thanks to \cite[Theorem 1]{XYY18}, which is re-stated
as Lemma \ref{FK} below, we can optimize and hence improve this range into $p\in(\frac12,\fz)$
in Theorem \ref{compact-thm}.

\item[{\rm(iv)}]
Chaffee et al. \cite[Theorem 3.1]{CCHTW} proved that,
letting $p_1,\ p_2\in(1,\fz)$, $p:=\frac{p_1 p_2}{p_1+p_2}>\frac12$ and
$\{\mathcal{R}_j^k:\ j\in\{1,2\}\ {\rm and}\ k\in\{1,\dots,n\}\}$
be the \emph{bilinear Riesz transform} defined by setting,
for any given $k\in\{1,\dots,n\}$ and any $x:=(x_1,\dots,x_n)\in\rn$,
$$\mathcal{R}_1^k(f,g)(x)
:={\rm p.\,v.}\int_{\rr^{2n}}\frac{x_k-y_k}{(|x-y|^2+|x-z|^2)^{n+\frac12}}
f(y)g(z)\,dy\,dz$$
and
$$\mathcal{R}_2^k(f,g)(x)
:={\rm p.\,v.}\int_{\rr^{2n}}\frac{x_k-z_k}{(|x-y|^2+|x-z|^2)^{n+\frac12}}
f(y)g(z)\,dy\,dz,$$
then, for any $i,\, j\in\{1,2\}$ and $k\in\{1,\dots,n\}$,
$[b,\mathcal{R}_j^k]_i$ is compact from
$L^{p_1}(\rn)\times L^{p_2}(\rn)$ to $L^{p}(\rn)$
if and only if $b\in\CMO(\rn)$.
Moreover, as a bilinear counterpart of \cite[Theorem 2]{Uchiyama78TohokuMathJ},
Chaffee et al. \cite[Remark 3.2]{CCHTW} pointed out that
\cite[Theorem 3.1]{CCHTW} also holds true if the bilinear Riesz transform
is replaced by any more general bounded convolution bilinear operator with rough kernel
\begin{align}\label{roughKernel}
\frac{\Omega(\frac{(y,z)}{|(y,z)|})}{(|y|^2+|z|^2)^n},
\end{align}
where $(y,z)\in\rn\times\rn\setminus\{\vec 0_{2n}\}$, $\Omega$ is a homogeneous
function of degree zero defined on the unit sphere in
$\rn\times\rn$ and is sufficiently smooth.
The main difference between the aforementioned results of Chaffee et al.
and Theorem \ref{compact-thm} is that the bilinear Riesz transform,
or the Calder\'on--Zygmund operator with kernel of the form \eqref{roughKernel},
does not satisfy \eqref{decay} and, conversely, the operator $T$
in Theorem \ref{compact-thm} surely does not have the form \eqref{roughKernel}.
Thus, the operators considering, respectively, in aforementioned results of
Chaffee et al. and Theorem \ref{compact-thm} are two completely different classes
of operators, and hence the corresponding theorems are also completely unrelated.

Besides, it is still an challenging open problem to find a class of
bilinear Calder\'on--Zygmund operators $\widetilde{T}$,
whose kernels satisfy \eqref{sizeregular} and \eqref{decay},
such that $\{[b,\widetilde{T}]_i\}_{i=1}^2$ are compact from
$L^{p_1}(\rn)\times L^{p_2}(\rn)$ to $L^{p}(\rn)$
if and only if $b\in\XMO(\rn)$,
where $p_1,\ p_2\in(1,\fz)$ and $p\in(\frac12,\fz)$
satisfy $\frac1p=\frac1{p_1}+\frac1{p_2}$.
\end{itemize}
\end{remark}

The remainder of this article is organized as follows.

In Section \ref{section2}, we first notice the nontriviality of $\XMO(\rn)$ when $n=1$,
namely,
$$\XMO(\rr)\subsetneqq\VMO(\rr);$$
see Proposition \ref{xMO} below.
Based on its calculation, we further show that $\XMO(\rn)$ has a similar
equivalent characterization as $\VMO(\rn)$ and $\CMO(\rn)$;
see Theorem \ref{xMO-char} below.
To achieve this, geometrically inspired by Uchiyama \cite{Uchiyama78TohokuMathJ},
we first approximate a given function
$f\in\XMO(\rn)$ by an exceptional simple function $g_\ez$ which is constructed
based on a dyadic family $\cf$, and some essential new techniques on dyadic cubes.
These new techniques provide some exponential decay property of the mean oscillation $\co(f,Q)$
when $Q$ is far away from the origin.
Roughly speaking, $\cf$ consists of numerous small equal-size dyadic cubes
near the origin, and farther away from the origin,
the larger the dyadic cubes in $\cf$ are.
Moreover, by the convolution of $g_\ez$ and
an even function $\varphi$ with delicate dilation
which strongly depends on $\ez$ and some
exquisite geometrical observations of $\cf$,
we construct an approximation element $h_\ez$ of $f$ in the $\BMO(\rn)$ norm.
To prove $h_\ez\in B_\fz(\rn)$,
we use a key analytic technic, namely, first to prove
$\lim_{|x|\to\fz}D^\az h_\ez(x)=0$ whenever $|\az|$ is odd
via the aforementioned exponential decay property;
from this and the Taylor remainder theorem,
we then deduce $\lim_{|x|\to\fz}D^\az h_\ez(x)=0$ whenever $|\az|$ is even,
which further implies that $h_\ez\in B_\fz(\rn)$
and finally completes the proof of Theorem \ref{xMO-char}.
As a corollary, we obtain
$$\xMO(\rn)=\XMO(\rn)\subsetneqq\VMO(\rn)\,\,$$
in Corollary \ref{xMO-coro} below,
which completely answers the open question raised in \cite{TX19}.

In Section \ref{section3}, we give the proof of Theorem \ref{compact-thm}.
Since a general $A_p$ weight is not invariant under translations,
the method in \cite{TX19} can not be applied to the weighted setting directly.
Thus, to overcome this difficulty,
a main new idea is to change the dominations of the
translation-invariant positive operators in \cite{TX19}
into the dominations of the maximal functions and the
smooth truncated Calder\'on--Zygmund operators.
To this end, we use several smooth truncated techniques and
the density arguments of compact operators.
Especially, using this method,
we can also optimize \cite[Theorem 1.1]{TX19}
from ``$K$ satisfies \eqref{N=123}'' to
``$K$ satisfies \eqref{decay}'' even in the unweighted case.

Throughout this article, we denote by $C$ and $\widetilde{C}$
{positive constants} which are independent of main parameters,
but they may vary from line to line.
Moreover, we use $C_{(\gamma,\ \beta,\ \ldots)}$ to denote
a positive constant depending on the indicated
parameters $\gamma,\ \beta,\ \ldots$.
Constants with subscripts, such as $C_{0}$ and $A_1$,
do not change in different occurrences.
Moreover, the {symbol} $f\lesssim g$ represents that
$f\le Cg$ for some positive constant $C$.
If $f\lesssim g$ and $g\lesssim f$, we then write $f\sim g$.
If $f\le Cg$ and $g=h$ or $g\le h$, we then write $f\ls g\sim h$
or $f\ls g\ls h$, \emph{rather than} $f\ls g=h$ or $f\ls g\le h$.
Let $\mathbb{N}:=\{1,\,2,...\}$ and $\mathbb{Z}_+:=\mathbb{N}\cup\{0\}$.
For any $p\in[1,\fz]$, let $p'$ denote its \emph{conjugate index},
that is, $p'$ satisfies  $1/p+1/p'=1$.
For any cube $Q\subsetneqq\rn$ and $f\in L_{\loc}^1(\rn)$, let
$$\fint_Q:=\frac1{|Q|}\int_Q\quad{\rm and}\quad f_Q:=\fint_Q f(y)\,dy;$$
moreover, the \emph{mean oscillation} $\co(f;Q)$ is defined by setting
$$\co(f;Q):=\fint_Q\lf|f(x)-f_Q\r|\,dx.$$

\section{Characterization and non-triviality of $\XMO(\rn)$}\label{section2}

In this section, we investigate the equivalent characterization of $\XMO(\rn)$.
To this end, we begin with the following concise counterexample on the real line.
\begin{proposition}\label{xMO}
There exists some $f\in \VMO(\rr)\setminus\XMO(\rr)$.
\end{proposition}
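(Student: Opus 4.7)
The plan is to exhibit the concrete function $f(x) := \sin x$ and verify that it belongs to $\VMO(\rr) \setminus \XMO(\rr)$. Membership $f \in \VMO(\rr)$ is immediate from the definition: $\sin$ is bounded (hence in $\BMO(\rr)$) and uniformly continuous on $\rr$, so $\sin \in C_{\rm u}(\rr) \cap \BMO(\rr) \subset \VMO(\rr)$.

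To prove $\sin \notin \XMO(\rr)$, I would first isolate a necessary condition satisfied by every element of $B_\fz(\rr)$: for any bounded interval $Q \subset \rr$ and any $g \in B_\fz(\rr)$,
$$
\lim_{|x|\to\fz} \co(g; Q+x) = 0.
$$
This follows from the one-dimensional mean value theorem, which yields the pointwise bound $\co(g; I) \le |I|\,\sup_{y \in I}|g'(y)|$ on any bounded interval $I$, combined with the assumption built into $B_\fz(\rr)$ that $g'$ vanishes at infinity.

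Next I would propagate this property to the $\BMO$-closure. For every fixed cube $Q$, the sublinearity of the mean oscillation gives $|\co(f; Q+x) - \co(g; Q+x)| \le \co(f-g; Q+x) \le \|f - g\|_{\BMO(\rr)}$, uniformly in $x$; therefore the condition $\lim_{|x|\to\fz}\co(\cdot; Q+x) = 0$ is stable under $\BMO$-convergence and passes from $B_\fz(\rr)$ to $\XMO(\rr) = \overline{B_\fz(\rr)}^{\BMO(\rr)}$.

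Finally I would violate this obstruction for $f = \sin$ with the explicit choice $Q := [0, 2\pi]$. By $2\pi$-periodicity, $(\sin)_{Q+x} = 0$ for every $x \in \rr$, and a direct integration gives
$$
\co(\sin; Q+x) = \frac{1}{2\pi}\int_{0}^{2\pi}|\sin t|\,dt = \frac{2}{\pi},
$$
a positive constant independent of $x$. This contradicts the necessary vanishing condition, forcing $\sin \notin \XMO(\rr)$. The only nontrivial step is the closure passage; the rest is elementary. Notably, this calculation already identifies condition (ii)$_2$ of Theorem \ref{xMO-char} as the precise obstruction separating $\XMO(\rr)$ from $\VMO(\rr)$, thereby foreshadowing the full characterization.
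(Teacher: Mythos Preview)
Your proof is correct, and it uses the same witness $f=\sin$ as the paper, but the argument is organized differently. The paper works directly: for an arbitrary $g\in B_1(\rr)$ it picks intervals $I_k=[2k\pi-\tfrac{\pi}{2},2k\pi+\tfrac{\pi}{2}]$ on which $|g'|$ is small, and then carries out an explicit trigonometric estimate of $\co(\sin-g;I_k)$ to produce the quantitative lower bound $\|\sin-g\|_{\BMO(\rr)}\ge\tfrac{1}{2\pi}$. You instead factor the argument through the abstract obstruction $\lim_{|x|\to\fz}\co(\,\cdot\,;Q+x)=0$, first checking it for $B_\fz(\rr)$ via $\co(g;I)\le|I|\sup_I|g'|$, then passing to the $\BMO$-closure by sublinearity of $\co$, and finally violating it with the full-period interval $Q=[0,2\pi]$ where periodicity makes the oscillation exactly $\tfrac{2}{\pi}$ for every translate. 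Your route avoids the hands-on estimates with the mean value of $f-g$ and the auxiliary point $\xi_k$, and, as you note, it already isolates condition (ii)$_2$ of Theorem~\ref{xMO-char} as the relevant invariant---so it dovetails with the subsequent characterization. The paper's route, in return, yields an explicit numerical lower bound on the $\BMO$ distance from $\sin$ to $B_1(\rr)$ (and hence to the larger space $\xMO(\rr)$), which your soft argument does not immediately give.
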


\begin{proof}
For any $x\in\rr$, let $f(x):=\sin(x)$. Then $f$ is
uniformly continuous and $f\in L^\fz(\rr)\subset\BMO(\rr)$.
Thus, $f\in \VMO(\rr)$. We claim that, for any $g\in B_1(\rn)$,
\begin{align}\label{d(f,B)}
\|f-g\|_{\BMO(\rr)}\ge\frac1{2\pi}.
\end{align}
Indeed, for any $k\in\nn$, let
$$I_k:=\lf[2k\pi-\frac\pi2,2k\pi+\frac\pi2\r].$$
Since $g\in B_1(\rn)$, it follows that $\lim_{|x|\to\fz}g'(x)=0$ and hence
we can choose $k$ large enough such that, for any $y\in I_k$,
\begin{equation}\label{g' small}
|g'(y)|<\frac4{\pi^2}<\frac{\sqrt2}{\pi}.
\end{equation}
Therefore, by the mean value theorem and the fundamental theorem of calculus,
we have
\begin{align}\label{eq1}
\mathcal{O}(f-g;I_k)=&\frac{1}{|I_k|}\int_{I_k}\lf|(f-g)(x)-(f-g)_{I_k}\r|\,dx\\
=&\frac{1}{|I_k|}\int_{I_k}\lf|(f-g)(x)-(f-g)(\xi_k)\r|\,dx\notag
=\frac{1}{|I_k|}\int_{I_k}\lf|\int_{\xi_k}^x(f-g)'(y)\,dy\r|\,dx\notag\\
=&\frac{1}{|I_k|}\int_{I_k}\lf|\int_{\xi_k}^x\lf[\cos(y)-g'(y)\r]\,dy\r|\,dx\notag,
\end{align}
where $\xi_k\in I_k$ is independent of $x$, but it may depend on $k$.
Without loss of generality, we may assume that $\xi_k\in[2k\pi-\frac\pi2,2k\pi]$.
Then, from \eqref{eq1}, we deduce that
\begin{align}\label{eq2}
\mathcal{O}(f-g;I_k)\ge\frac1\pi\int_{2k\pi}^{2k\pi+\frac\pi2}
     \lf|\int_{\xi_k}^x\lf[\cos(y)-g'(y)\r]\,dy\r|\,dx.
\end{align}
By the fact that $\xi_k\in[2k\pi-\frac\pi2,2k\pi]$
and $x\in[2k\pi,2k\pi+\frac\pi2]$,
we know that $\frac{x+\xi_k}{2}\in[2k\pi-\frac\pi4,2k\pi+\frac\pi4]$ and
$\frac{x-\xi_k}{2}\in[0,\frac\pi2]$. Therefore,
\begin{align}\label{sin cos}
\cos\lf(\frac{x+\xi_k}{2}\r)\ge \frac{\sqrt2}2\quad\mathrm{and}\quad
    \sin\lf(\frac{x-\xi_k}{2}\r)\ge\frac2\pi\frac{x-\xi_k}2=\frac{x-\xi_k}{\pi}.
\end{align}
From \eqref{g' small} and \eqref{sin cos}, it follows that
\begin{align}\label{positive}
\int_{\xi_k}^x\lf[\cos(y)-g'(y)\r]\,dy
&>\int_{\xi_k}^x\lf[\cos(y)-\frac{\sqrt2}\pi\r]\,dy
=\sin(x)-\sin(\xi_k)-\frac{\sqrt2}\pi(x-\xi_k)\\
&=2\cos\lf(\frac{x+\xi_k}{2}\r)\sin\lf(\frac{x-\xi_k}{2}\r)
  -\frac{\sqrt2}\pi(x-\xi_k)\notag\\
&\ge2\frac{\sqrt2}2\frac{x-\xi_k}{\pi}-\frac{\sqrt2}\pi(x-\xi_k)
 \ge0\notag.
\end{align}
By \eqref{eq2}, \eqref{positive}, $\xi_k\in[2k\pi-\frac\pi2,2k\pi]$
and \eqref{g' small}, we conclude that
\begin{align*}
\mathcal{O}(f-g;I_k)&\ge \frac1\pi\int_{2k\pi}^{2k\pi+\frac\pi2}
\int_{\xi_k}^x\lf[\cos(y)-g'(y)\r]\,dy\,dx\ge \frac1\pi\int_{2k\pi}^{2k\pi+\frac\pi2}
\int_{2k\pi}^x\lf[\cos(y)-\frac4{\pi^2}\r]\,dy\,dx\\
&=\frac1\pi\int_{2k\pi}^{2k\pi+\frac\pi2}
     \lf[\sin(x)-\frac4{\pi^2}(x-2k\pi)\r]\,dx\\
&=\frac1\pi\lf(1-\frac4{\pi^2}\int_0^\frac\pi2z\,dz\r)
=\frac1\pi\lf[1-\frac4{\pi^2}\frac12\lf(\frac{\pi}2\r)^2 \r]=\frac1{2\pi}.
\end{align*}
This implies that the inequality $\eqref{d(f,B)}$ holds true,
which completes the proof of Proposition \ref{xMO}.
\end{proof}

\begin{remark}
One can modify the above calculation from $\rr$ to $\rn$,
but this process may be tedious.
However, if, for any $(x_1,...,x_n)\in\rn$, let
$$f(x_1,...,x_n):=\prod_{k=1}^n\sin(x_k)$$
then, by Theorem \ref{xMO-char}, we immediately know that
$$f\in \VMO(\rn)\setminus\XMO(\rn);$$
see the proof of Corollary \ref{xMO-coro} below.
\end{remark}

In what follows, we need to use the following equivalent
characterizations of $\VMO(\rn)$  and $\CMO(\rn)$ established by
Sarason \cite{Sarason75} and Uchiyama \cite{Uchiyama78TohokuMathJ},
respectively.

\begin{proposition}(\cite[Theorem 1]{Sarason75})\label{VMO-char}
Let $f\in\BMO(\rn)$.
Then $f\in\VMO(\rn)$ if and only if
$$\lim_{a\to0^+}\sup_{|Q|=a}\co(f; Q)=0.$$
\end{proposition}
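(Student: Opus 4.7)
The plan is to prove both implications separately. For the forward direction, I would introduce the seminorm $V(f) := \limsup_{a \to 0^+} \sup_{|Q| = a} \co(f; Q)$ on $\BMO(\rn)$ and observe that $V(f) \le \|f\|_{\BMO(\rn)}$, so $V$ is continuous with respect to the BMO norm. For any $g \in C_{\rm u}(\rn) \cap \BMO(\rn)$, uniform continuity yields, for each $\delta > 0$, some $\eta > 0$ with $|g(x) - g(y)| < \delta$ whenever $|x - y| < \eta$; hence $\co(g; Q) < \delta$ once $\diam(Q) < \eta$, giving $V(g) = 0$. Since $\VMO(\rn)$ is by definition the BMO-closure of $C_{\rm u}(\rn) \cap \BMO(\rn)$ and $V$ vanishes on this dense set, continuity forces $V \equiv 0$ on $\VMO(\rn)$.

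For the reverse direction, given $f \in \BMO(\rn)$ satisfying $\lim_{a \to 0^+} \sup_{|Q| = a} \co(f; Q) = 0$, I would approximate $f$ in BMO by its standard mollification $f_\epsilon := f * \varphi_\epsilon$, where $\varphi \in C_{\rm c}^\infty(\rn)$ is an even nonnegative bump with $\int \varphi = 1$ and $\varphi_\epsilon(x) := \epsilon^{-n} \varphi(x/\epsilon)$. Using $\int \nabla \varphi_\epsilon = 0$, the representation $\nabla f_\epsilon(x) = \int \nabla \varphi_\epsilon(x - y) [f(y) - f_{B(x, \epsilon)}] \, dy$ gives $\|\nabla f_\epsilon\|_{L^\infty(\rn)} \lesssim \epsilon^{-1} \|f\|_{\BMO(\rn)}$, so $f_\epsilon$ is Lipschitz and hence in $C_{\rm u}(\rn) \cap \BMO(\rn)$. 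Thus it remains to prove $\|f - f_\epsilon\|_{\BMO(\rn)} \to 0$ as $\epsilon \to 0^+$.

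The main estimate splits into two regimes according to the size of the cube $Q$. Writing $f(x) - f_\epsilon(x) = \int \varphi_\epsilon(y) [f(x) - f(x - y)] \, dy$ and, in parallel, $(f - f_\epsilon)_Q = \int \varphi_\epsilon(y) [f_Q - f_{Q - y}] \, dy$, a direct dualization gives
\begin{equation*}
\co(f - f_\epsilon; Q) \le 2 \sup_{|y| \le \epsilon} \co(f; Q - y).
\end{equation*}
For cubes of side length $r \le \epsilon$ this yields $\co(f - f_\epsilon; Q) \le 2 \sup_{|Q'| \le \epsilon^n} \co(f; Q') \to 0$ by hypothesis. For cubes of side length $r > \epsilon$, however, the bound above only recovers $\lesssim \|f\|_{\BMO(\rn)}$, and a finer argument is needed: partition $Q$ into roughly equal sub-cubes $\{q_i\}$ of side length in $[\epsilon/2, \epsilon]$, note that for $|y| \le \epsilon$ both $q_i$ and $q_i - y$ are contained in a single cube $q_i^*$ of side length $\le 3\epsilon$, and deduce $\fint_{q_i} |f(x) - f(x - y)| \, dx \lesssim \co(f; q_i^*)$. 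Averaging over the $q_i$ and integrating against $\varphi_\epsilon$ yields $\fint_Q |f - f_\epsilon| \, dx \lesssim \sup_{|Q'| \le (3\epsilon)^n} \co(f; Q')$, and then $\co(f - f_\epsilon; Q) \le 2 \fint_Q |f - f_\epsilon| \, dx$ turns this into the required uniform bound.

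The hard part will be precisely the partitioning step for large cubes: the naive convolution estimate is only small when the ambient scale of $Q$ is already small, while for large $Q$ one must first localize to $\epsilon$-sized windows in order to feed in the small-scale vanishing hypothesis. Combining both regimes and letting $\epsilon \to 0^+$ yields $\|f - f_\epsilon\|_{\BMO(\rn)} \lesssim \sup_{|Q'| \le C \epsilon^n} \co(f; Q') \to 0$, which, together with $f_\epsilon \in C_{\rm u}(\rn) \cap \BMO(\rn)$, completes the reverse implication.
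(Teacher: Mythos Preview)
The paper does not prove this proposition at all: it is simply quoted as \cite[Theorem~1]{Sarason75} and used as a black box. So there is no ``paper's own proof'' to compare against here.

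That said, your argument is correct and is essentially the standard mollification proof of Sarason's theorem. The forward direction is immediate from the continuity of the seminorm $V$ on $\BMO(\rn)$ and its vanishing on uniformly continuous functions. For the reverse direction your key observation is the right one: the naive bound $\co(f-f_\ez;Q)\le 2\sup_{|y|\le\ez}\co(f;Q-y)$ only exploits the hypothesis when $\ell(Q)\le\ez$, and for large cubes one must first localize to sub-cubes of scale $\sim\ez$ so that the small-scale vanishing can be invoked. Your partition into sub-cubes $q_i$ with $\ell(q_i)\in[\ez/2,\ez]$ and the containment of $q_i\cup(q_i-y)$ in a cube $q_i^*$ of side $\le 3\ez$ gives the uniform bound
\[
\co(f-f_\ez;Q)\le 2\fint_Q|f-f_\ez|\lesssim \sup_{\ell(Q')\le 3\ez}\co(f;Q'),
\]
with implicit constant depending only on $n$, and letting $\ez\to0^+$ finishes. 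One small point worth making explicit: you should state that $\supp(\varphi)\subset B(\vec 0_n,1)$ so that the integration in $y$ is genuinely restricted to $|y|\le\ez$; otherwise the geometric containment $q_i\cup(q_i-y)\subset q_i^*$ fails.
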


\begin{proposition}(\cite[p.\,166]{Uchiyama78TohokuMathJ})\label{CMO-char}
Let $f\in\BMO(\rn)$.
Then $f\in\CMO(\rn)$ if and only if $f$ satisfies the following three conditions:
\begin{itemize}
\item [{\rm(i)}] $$\lim_{a\to0^+}\sup_{|Q|=a}\co(f; Q)=0;$$
\item [{\rm (ii)}]for any cube $Q\subset\rn$, $$\lim_{|x|\to\fz}\co(f; Q+x)=0;$$
\item [{\rm(iii)}]$$\lim_{a\to\fz}\sup_{|Q|=a}\co(f; Q)=0.$$
\end{itemize}
\end{proposition}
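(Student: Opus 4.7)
The plan is to prove the two implications separately. For the necessity direction, I would first verify the three conditions on the dense subset $C_{\rm c}^\fz(\rn)\cap\BMO(\rn)$: if $\varphi\in C_{\rm c}^\fz(\rn)$, then (i) follows from uniform continuity since $\co(\varphi;Q)\le\diam(Q)\|\nabla\varphi\|_{L^\fz}$; conditions (ii) and (iii) follow from compact support, because once $Q+x$ (respectively $Q$) lies outside $\supp\varphi$, we have $\co(\varphi;Q+x)=0$, and for $Q$ very large the contribution of $\varphi$ is concentrated on a fixed compact set so that $\co(\varphi;Q)\le 2|Q|^{-1}\|\varphi\|_{L^1}\to 0$. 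Then I would pass (i)--(iii) to the $\BMO$-closure using the uniform estimate $|\co(f;Q)-\co(g;Q)|\le 2\|f-g\|_{\BMO(\rn)}$ valid for every cube $Q$: given $\ez>0$ and $f\in\CMO(\rn)$, pick $\varphi\in C_{\rm c}^\fz(\rn)$ with $\|f-\varphi\|_{\BMO(\rn)}<\ez/4$ and apply (i)--(iii) to $\varphi$ to conclude them for $f$ with $\ez$ replacing $0$.

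For the sufficiency direction, which is the substantive content, I would follow Uchiyama's dyadic construction scheme. Given $\ez>0$ and $f$ satisfying (i), (ii) and (iii), use (i) to produce $a_0>0$ with $\sup_{|Q|\le a_0}\co(f;Q)<\ez$, use (iii) to produce $a_1<\fz$ with $\sup_{|Q|\ge a_1}\co(f;Q)<\ez$, and use (ii) to produce $R>0$ large enough that every cube $Q$ of intermediate volume (between $a_0$ and $a_1$) with $\dist(Q,\vec 0_n)>R$ satisfies $\co(f;Q)<\ez$. The next step is to partition $\rn$ into a dyadic family $\cf$ in which cubes near the origin have side length $\sim a_0^{1/n}$ (so that $f$ is close to a constant on each such cube in an $L^1$-average sense) and set $g:=\sum_{Q\in\cf}f_Q\chi_Q$. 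One then estimates $\|f-g\|_{\BMO(\rn)}$ by testing on an arbitrary cube $Q'$, splitting the comparison $f-g$ into (a) the intra-cube oscillation of $f$ on the members of $\cf$ meeting $Q'$, and (b) the inter-cube jumps of the step function $g$; the first is controlled by (i), and the second by combining (ii) and (iii) to bound differences of neighbouring averages $|f_Q-f_{Q'}|$ by mean oscillations on an enveloping cube. Finally I would mollify $g$ against a smooth bump $\psi$ of small scale to produce $h\in C^\fz(\rn)$, and truncate $h$ against a smooth cut-off supported in a large ball, using (ii) and (iii) to show that the truncation error in $\BMO$ is $\lesssim\ez$.

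The main obstacle is Step (b) above: uniformly controlling $\|f-g\|_{\BMO(\rn)}$ when the test cube $Q'$ is not aligned with $\cf$ and may straddle many cubes of different scales. The geometric bookkeeping needs to split $Q'$ into pieces according to which member of $\cf$ each point lies in, and then telescope the differences $|f_Q-f_{Q'}|$ through a chain of cubes joining $Q$ to a canonical ancestor, so that each term in the chain can be bounded by a corresponding mean oscillation and hence by $\ez$ via one of (i), (ii), (iii). A second, subtler issue is the truncation-plus-mollification step: one must choose the mollifier scale small compared with the smallest side length occurring in $\cf$, but large compared with the modulus of continuity induced by $f-g$, and the cut-off radius must exceed $R$ from the application of (ii); balancing these parameters so that all three error terms are simultaneously $O(\ez)$ is the delicate part. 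Once these pieces fit together, the approximant $h\in C_{\rm c}^\fz(\rn)$ satisfies $\|f-h\|_{\BMO(\rn)}\lesssim\ez$, which gives $f\in\CMO(\rn)$.
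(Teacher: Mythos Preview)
The paper does not prove this proposition at all; it is simply quoted from Uchiyama's 1978 article as a known result (note the attribution ``\cite[p.\,166]{Uchiyama78TohokuMathJ}'' in the statement) and then used as a tool elsewhere in the paper. There is therefore no proof in the paper to compare your proposal against.

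That said, two remarks may help you. First, the paper does carry out in full detail the analogous construction for $\XMO(\rn)$ (Theorem~\ref{xMO-char}, the implication (ii)$\Rightarrow$(iii)), and your sketch---dyadic partition $\cf$, step-function approximation $g_\ez=\sum_{Q\in\cf} f_Q\chi_Q$, then mollification---matches that scheme closely; so your Step~(b) bookkeeping can be checked line-by-line against Steps~i)--iii) of that proof. Second, in the proof of Proposition~\ref{xMO-replace} the authors note that Uchiyama in fact established the $\CMO$ characterization with (ii) replaced by the uniform condition
\[
{\rm(ii')}\qquad \lim_{R\to\fz}\sup_{Q\cap Q(\vec 0_n,R)=\emptyset}\co(f;Q)=0,
\]
and then observed that (i)+(ii)+(iii) $\Longleftrightarrow$ (i)+(ii')+(iii). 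Working with (ii') in the sufficiency direction removes exactly the difficulty you flag in Step~(b)---handling cubes of intermediate size far from the origin---because (ii') already gives a uniform bound over all such cubes, not just translates of a fixed one. Your plan is essentially correct, but routing through (ii') as Uchiyama did makes the geometric bookkeeping considerably cleaner.
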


Observe that, in the proof of Proposition \ref{xMO},
the mean oscillations $\{\co(f;I_k)\}_{k\in\nn}$ violate
Proposition \ref{CMO-char}(ii), which leads us to consider
the limit condition (ii)$_2$ of Theorem \ref{xMO-char}(ii).

Now, we are in the position to give the proof of Theorem \ref{xMO-char}.

\begin{proof}[Proof of Theorem \ref{xMO-char}]
We first prove (i) $\Longrightarrow$ (ii).
By the density argument, it suffices to show that,
for any $f\in B_1(\rn)$, both (ii)$_1$ and (ii)$_2$
of Theorem \ref{xMO-char}(ii) hold true.
Indeed, for any $x,\ y\in\rn$, by the mean value theorem, we obtain
$$|f(x)-f(y)|=|\nabla f(\xi)\cdot(x-y)|\le\|\nabla f\|_{L^\fz(\rn)}|x-y|,$$
where $\xi$ is on the segment $\overline{xy}$ connecting $x$ and $y$, and
$\|\nabla f\|_{L^\fz(\rn)}<\fz$ because $f\in B_1(\rn)$.
This implies that $f\in C_{\rm u}(\rn)$ and hence $f\in \VMO(\rn)$.
From this and Proposition \ref{VMO-char},
it follows that $f$ satisfies (ii)$_1$ of Theorem \ref{xMO-char}(ii).
Moreover, for any fixed cube $Q\subsetneqq\rn$, by the mean value theorem again,
we conclude that
\begin{align*}
\co(f;Q)&=\frac1{|Q|}\int_Q\lf|f(x)-\frac1{|Q|}\int_Q f(y)\,dy\r|\,dx
\le\frac1{|Q|^2}\int_Q\int_Q|f(x)-f(y)|\,dy\,dx\\
&=\frac1{|Q|^2}\int_Q\int_Q|\nabla f(\xi)\cdot(x-y)|\,dy\,dx
\ls\frac1{|Q|^2}\int_Q\int_Q\lf|\nabla f(\xi)\r||Q|^\frac1n\,dy\,dx\\
&\ls \lf[\sup_{z\in Q}|\nabla f(z)|\r]|Q|^\frac1n.
\end{align*}
Thus, for the given cube $Q$ and any $x\in\rn$, we have
$$\co(f;Q+x)\ls \lf[\sup_{z\in Q+x}|\nabla f(z)|\r]|Q+x|^\frac1n
\sim \lf[\sup_{z\in Q+x}|\nabla f(z)|\r]|Q|^\frac1n\to0$$
as $|x|\to\fz$, which shows that
$f$ satisfies (ii)$_2$ of Theorem \ref{xMO-char}(ii).
This finishes the proof that (i) $\Longrightarrow$ (ii).

Now, we prove (ii) $\Longrightarrow$ (iii).
Let $f\in\BMO(\rn)$ satisfy both (ii)$_1$ and (ii)$_2$
of Theorem \ref{xMO-char}(ii).
To prove $f\in\XMO(\rn)$, for any fixed $\ez\in(0,\fz)$,
it suffices to show that there exist a simple function
$g_\ez$ satisfying
\begin{align}\label{f-g_e}
\|f-g_\ez\|_{\BMO(\rn)}\ls \ez,
\end{align}
and a function $h_\ez\in B_\fz(\rn)$ satisfying
\begin{align}\label{g_e-h_e}
\|g_\ez-h_\ez\|_{\BMO(\rn)}\ls \ez.
\end{align}

The remainder of the proof that (ii) $\Longrightarrow$ (iii)
consists of the following three steps:
\begin{enumerate}
\item [{\textbf{Step i)}}] Construct a family $\cf$ of disjoint dyadic cubes
and introduce a simple function $g_\ez$ via $\cf$.
\item [{\textbf{Step ii)}}] Show that \eqref{f-g_e} holds true.
\item [{\textbf{Step iii)}}] Define $h_\ez$ via $g_\ez$, and then show that
  \eqref{g_e-h_e} holds true and $h_\ez\in B_\fz(\rn)$.
\end{enumerate}

We proceed in order and begin with \textbf{Step i)}.
For the above given $\ez\in(0,\fz)$, by (ii)$_1$ of Theorem \ref{xMO-char}(ii),
we know that
there exists a negative integer $j(\ez;0)\in\zz_-:=\{-1,-2,\dots\}$ such that,
for any cube $Q$ with the side length $\ell(Q)<2^{j(\ez;0)+1}$,
\begin{align}\label{j(e;0)}
\co(f;Q)<\ez.
\end{align}
Here and thereafter, we denote the side length of a cube $Q$ by $\ell(Q)$.
Besides, we always use $Q(x,r)$ to denote the cube
centered at $x$ with the side length $2r$,
and $\cd$ to denote the family of all classical dyadic cubes in $\rn$.
By (ii)$_2$ of Theorem \ref{xMO-char}(ii), we find that
there exists some $j(\ez;1)\in\zz$ with $j(\ez;1)>j(\ez;0)$
such that, for any $x\in\rn$ with $|x|\ge j(\ez;1)$,
\begin{align}\label{j(e;1)}
\co(f;Q(x,2^{j(\ez;0)+1}))<2^{j(\ez;0)}\ez
  \le 2^{-1}\ez<\ez.
\end{align}
Repeating the above procedure, we obtain, for any $k\in\nn$,
there exists some $j(\ez;k)\in\zz$
with $j(\ez;k)>j(\ez;k-1)>\cdots>j(\ez;0)$ such that,
for any $x\in\rn$ with $|x|\ge 2^{j(\ez;k)}$,
\begin{align}\label{j(e;k)}
\co(f;Q(x,2^{j(\ez;0)+k}))<2^{kj(\ez;0)}\ez<\ez.
\end{align}
Now, define $\{\cf_k\}_{k\in\nn}$ and $\cf$ as follows:
\begin{align*}
\cf_1:=&\lf\{Q\subset \overline{Q(\vec 0_n,2^{j(\ez;1)})}:
\,\,Q\in\cd \mathrm{\,\,with\,\,}\ell(Q)=2^{j(\ez;0)} \r\};\\
\cf_2:=&\lf\{Q\subset \overline{Q(\vec 0_n,2^{j(\ez;2)})
     \setminus Q(\vec 0_n,2^{j(\ez;1)})}:
\,\,Q\in\cd \mathrm{\,\,with\,\,}\ell(Q)=2^{j(\ez;0)+1} \r\};\\
\vdots\\
\cf_k:=&\lf\{Q\subset \overline{Q(\vec 0_n,2^{j(\ez;k)})
     \setminus Q(\vec 0_n,2^{j(\ez;k-1)})}:
\,\,Q\in\cd \mathrm{\,\,with\,\,}\ell(Q)=2^{j(\ez;0)+k-1} \r\};\\
\vdots
\end{align*}
and
$$\cf:=\bigcup_{k\in\nn}\cf_k,$$
here and thereafter, for any subset $A$ of $\rn$,
we use $\overline A$ to denote its closure in $\rn$.
Then, for any $k\in\nn$, $\cf_k$ contains disjoint cubes
with the same side length and hence $\cf$ is a family of disjoint dyadic cubes.
Next, we introduce the simple function $g_\ez$ associated with $\cf$ as follows.
Since the cubes in $\cf$ are disjoint, it follows that, for any $x\in\rn$,
there exists a unique cube $Q_{(x)}\in\cf$ such that $Q_{(x)}\ni x$; let
\begin{align}\label{g_e-def}
g_\ez(x):=f_{Q_{(x)}}:=\frac{1}{|Q_{(x)}|}\int_{Q_{(x)}}f(y)\,dy.
\end{align}
Then $g_\ez$ is a simple function on $\rn$.
This finishes the proof of \textbf{Step i)}.

\textbf{Step ii)} To estimate $\|f-g_\ez\|_{\BMO(\rn)}$,
we first claim that, for any $x,\ y\in\rn$ with
$\overline{Q_{(x)}}\cap \overline{Q_{(y)}}\neq\emptyset$,
\begin{align}\label{g(x)-g(y)}
|g_\ez(x)-g_\ez(y)|\ls\ez.
\end{align}
Indeed, if both $x$ and $y$ lie in the same cube $Q\in\cf$, then,
by the definition of $g_\ez$, we know that $g_\ez(x)=g_\ez(y)$
and hence \eqref{g(x)-g(y)} holds true trivially.
If $x$ and $y$ lie, respectively, in different dyadic cubes $Q_{(x)}$ and  $Q_{(y)}$,
then, from the construction of $\cf$,
it follows that $Q_{(x)}$ and  $Q_{(y)}$ must be adjacent,
namely, $\overline{Q_{(x)}}\cap \overline{Q_{(y)}}$ is a point,
segment or surface. Anyhow, $|Q_{(x)}|$ and $|Q_{(y)}|$ are comparable
and hence there exists a larger dyadic cube $Q_{(x,y)}\in\cd$ such that
$$Q_{(x)}\subset Q_{(x,y)}\quad\mathrm{and}\quad Q_{(y)}\subset Q_{(x,y)},$$
whose side length
$$\ell(Q_{(x,y)})=2\max\{\ell(Q_{(x)}),\ell(Q_{(y)})\}
\sim\ell(Q_{(x)})\sim\ell(Q_{(y)}),$$
where $\ell(Q_{(x)})$ and $\ell(Q_{(y)})$ denote the side lengths of
$Q_{(x)}$ and $Q_{(y)}$, respectively.
From the definition of $\cf$ and \eqref{j(e;k)}, we deduce that
$$\co(f;Q_{(x,y)})<\ez$$
and hence
\begin{align}\label{k(x,y)}
|g_\ez(x)-g_\ez(y)|
&\le\lf|f_{Q_{(x)}}-f_{Q_{(x,y)}}\r|+\lf|f_{Q_{(y)}}-f_{Q_{(x,y)}}\r|
\le2\lf[\frac{|Q_{(x,y)}|}{|Q_{(x)}|}+\frac{|Q_{(x,y)}|}{|Q_{(y)}|}\r]\co(f;Q_{(x,y)})\\
&\ls\co(f;Q_{(x,y)})\ls\ez.\notag
\end{align}
Thus, \eqref{g(x)-g(y)} also holds true in this case.
This finishes the proof of the above claim.

Now, we estimate $\|f-g_\ez\|_{\BMO(\rn)}:=\sup_{Q}\co(f-g_\ez;Q)$
via considering different side lengths $\ell(Q)$ in the supremum.

When $\ell(Q)\in(0,2^{j(\ez;0)})$,
by the definition of $\cf$, $Q$ intersects at most $2^n$ different cubes in $\cf$.
From this, the definition of $g_\ez$ and \eqref{g(x)-g(y)}, we deduce that
\begin{align}\label{less2^n}
\frac1{|Q|^2}\int_Q\int_Q|g_\ez(x)-g_\ez(y)|\,dxdy\ls\ez.
\end{align}
Combining \eqref{j(e;0)} with \eqref{less2^n}, we obtain
\begin{align*}
\co(f-g_\ez;Q)&\le\co(f;Q)+\co(g_\ez;Q)
<\ez+\frac1{|Q|^2}\int_Q\int_Q|g_\ez(x)-g_\ez(y)|\,dxdy\ls\ez.
\end{align*}

When $\ell(Q)\in[2^{j(\ez;0)},2^{j(\ez;0)+1})$,
we consider the following two cases:

{\textbf{Case i)}} $Q\cap Q(\vec0_n,2^{j(\ez;1)})= Q$, namely,
$Q\subset Q(\vec0_n,2^{j(\ez;1)})$.
In this case, by the definition of $\cf_1$,
$Q$ intersects at most $3^n$ different cubes in $\cf_1$.
This, together with the definition of $g_\ez$ and \eqref{j(e;0)}, implies that
\begin{align*}
\co(f-g_\ez;Q)&\le\frac2{|Q|}\int_Q \lf|f(x)-g_\ez(x)\r|\,dx
 =\sum_{Q_{\ast}\in\cf_1:\,\,Q\cap Q_{\ast}\neq\emptyset}\frac2{|Q|}
  \int_{Q_{\ast}} \lf|f(x)-g_\ez(x)\r|\,dx\\
&=2\sum_{Q_{\ast}\in\cf_1:\,\,Q\cap Q_{\ast}\neq\emptyset}
  \frac{|Q_{\ast}|}{|Q|}\fint_{Q_{\ast}}\lf|f(x)-f_{Q_\ast}\r|\,dx\\
&<2\ez\sum_{Q_{\ast}\in\cf_1:\,\,Q\cap Q_{\ast}\neq\emptyset}
  \frac{|Q_{\ast}|}{|Q|}\ls\ez.
\end{align*}

{\textbf{Case ii)}} $Q\cap Q(\vec0_n,2^{j(\ez;1)})\neq Q$.
In this case, we claim that there exists some $x_Q\in\rn$ such that
\begin{align}\label{x_Q}
Q\subset Q(x_Q,2^{j(\ez;0)+1})\quad {\rm and}\quad |x_Q|>2^{j(\ez;1)}.
\end{align}
Indeed, if $Q\cap Q(\vec0_n,2^{j(\ez;1)})=\emptyset$,
we can apparently choose $x_Q$ to be the center of $Q$
and, if $Q\cap Q(\vec0_n,2^{j(\ez;1)})\neq\emptyset$,
the existence of $x_Q$ is obtained from the fact that
the distance between the center of $Q$ and the boundary of
$Q(\vec0_n,2^{j(\ez;1)})$ is less than $\frac12\ell(Q)<2^{j(\ez;0)}$.
Thus, the above claim holds true.
By \eqref{x_Q} and \eqref{j(e;1)}, we find that
\begin{align}\label{O(f;Q)}
\co(f;Q)\le2\frac{[2^{j(\ez;0)+2}]^n}{|Q|}\co(f,Q(x_Q;2^{j(\ez;0)+1}))
\ls\ez.
\end{align}
Meanwhile, by the definition of $\cf$, $Q$ intersects at most
$3^n$ different cubes in $\cf$.
Therefore, \eqref{less2^n} still holds true.
Combining \eqref{O(f;Q)} and \eqref{less2^n}, we obtain
\begin{align*}
\co(f-g_\ez;Q)&\le\co(f;Q)+\co(g_\ez;Q)
\ls\ez+\frac1{|Q|^2}\int_Q\int_Q|g_\ez(x)-g_\ez(y)|\,dxdy\ls\ez.
\end{align*}

Combining \textbf{Case i)} and \textbf{Case ii)},
we finally conclude that $\co(f-g_\ez;Q)\ls \ez$
when $\ell(Q)\in[2^{j(\ez;0)},2^{j(\ez;0)+1})$.

Observe that, by the  geometrical property of $\cf$,
for any $k\in\nn$, the above estimations when $\ell(Q)\in[2^{j(\ez;0)},2^{j(\ez;0)+1})$
can be modified into the case $\ell(Q)\in[2^{j(\ez;0)+k-1},2^{j(\ez;0)+k})$
with the implicit positive constant depending only on the dimension $n$.
This finishes the proof of \textbf{Step ii)}.

\textbf{Step iii)}
Let $\varphi\in C_{\rm c}^\fz(\rn)$ be a non-negative even function with
$\int_{\rn}\varphi(x)\,dx=1$ and
$$\supp(\varphi)\subset B(\vec0_n,1):=\{x\in\rn:\,\,|x|\le1\}.$$
Let $h_\ez:=g_\ez\ast\varphi_{2^{j(\ez;0)}}$, where
$\varphi_{2^{j(\ez;0)}}(\cdot)
:=2^{-nj(\ez;0)}\varphi(2^{-j(\ez;0)}\cdot)$.
Notice that, for any $x,\ y\in\rn$ with $|x-y|\le 2^{j(\ez;0)}$,
by the definition of $\cf$, we know that
$\overline{Q_{(x)}}\cap \overline{Q_{(y)}}\neq\emptyset$.
Then, for any $x\in\rn$, by \eqref{g(x)-g(y)}, we have
\begin{align*}
\lf|g_\ez(x)-h_\ez(x)\r|
&=\lf|\int_{\rn}[g_\ez(x)-g_\ez(y)]\varphi_{2^{j(\ez;0)}}(x-y)\,dy\r|\\
&\le\int_{B(x,2^{j(\ez;0)})}|g_\ez(x)-g_\ez(y)|
   |\varphi_{2^{j(\ez;0)}}(x-y)|\,dy\\
&\ls\ez\int_{B(x,2^{j(\ez;0)})}|\varphi_{2^{j(\ez;0)}}(x-y)|\,dy
\sim\ez,
\end{align*}
where $B(x,2^{j(\ez;0)})$ denotes the ball centered at $x$
with radius $2^{j(\ez;0)}$. Thus,
$$\lf\|g_\ez-h_\ez\r\|_{\BMO(\rn)}
\le2\lf\|g_\ez-h_\ez\r\|_{L^\fz(\rn)}\ls\ez,$$
which shows that \eqref{g_e-h_e} holds true.

It remains to prove that $h_\ez\in B_\fz(\rn)$.
Indeed, by $\varphi\in C_{\rm c}^\fz(\rn)$, \eqref{f-g_e},
\eqref{g_e-h_e} and $f\in\BMO(\rn)$,
we know that $h_\ez\in C^\fz(\rn)$ and $h_\ez\in \BMO(\rn)$.
Thus, to show $h_\ez\in B_\fz(\rn)$, it suffices to prove that,
for any given $\widetilde{\ez}\in(0,\ez)$ and $\az\in\zz_+^n\setminus\{\vec 0_n\}$,
any $x\in\rn$ and $|x|>E_{(\az,n)}$ with $E_{(\az,n)}\in (0,\fz)$ being determined later,
$$\lf|D^\az h_\ez(x)\r|\ls\widetilde{\ez}.$$

Indeed, when  $\az\in\zz_+^n$ and $|\az|\in\{2m-1\}_{m\in\nn}$,
by $j(\ez;0)<0$ and $\widetilde{\ez}<\ez$,
we can choose $k_{|\az|}$ to be the smallest positive integer such that
\begin{align}\label{eez}
2^{(|\az|+k_{|\az|})j(\ez;0)}\ez\le 2^{|\az|j(\ez;0)}\widetilde{\ez}
\end{align}
and $\{k_{|\az|}\}_{|\az|\in\{2m-1\}_{m\in\nn}}$ is increasing, namely,
\begin{align}\label{kaz}
k_1\le\cdots\le k_{|\az|}\le k_{|\az|+2}\le\cdots.
\end{align}
Meanwhile, from the fact that $\varphi$ is even,
we deduce that $D^\az\varphi$ is odd and hence
\begin{align}\label{int=0}
\int_{\rn}D^\az\varphi(x)\,dx=0.
\end{align}
Also, in this case, for any $x\in\rn$ with
$|x|>E_{(\az,n)}:=\sqrt n2^{j(\ez;|\az|+k_{|\az|})}$
and any $y\in B(x,2^{j(\ez;0)})$, by the definition of $\cf$, we have
$\overline{Q_{(x)}}\cap Q(\vec 0_n,2^{j(\ez;|\az|+k_{|\az|})})=\emptyset$
and
$\overline{Q_{(x)}}\cap \overline{Q_{(y)}}\neq\emptyset$,
which, combined with \eqref{k(x,y)}, the definition of $\cf$ and \eqref{j(e;k)}, further implies that
\begin{align}\label{k(x,y)-sharp}
\lf|g_\ez(y)-g_\ez(x)\r|\ls\co(f;Q_{(x,y)})
\ls 2^{(|\az|+k_{|\az|})j(\ez;0)}\ez
\ls 2^{|\az|j(\ez;0)}\widetilde{\ez},
\end{align}
where $Q_{(x,y)}\supset(Q_{(x)}\cup Q_{(y)})$ is the dyadic cube
comparable with both $Q_{(x)}$ and $Q_{(y)}$
[see the first paragraph of the proof of \textbf{Step ii)} above] and the implicit
positive constant only depends on $n$.
By \eqref{int=0} and \eqref{k(x,y)-sharp}, we conclude that, for any
$\az\in\zz_+^n$ with $|\az|\in\{2m-1\}_{m\in\nn}$, and any $x\in\rn$
with $|x|>E_{(\az,n)}$,
\begin{align}\label{odd}
|D^\az h_\ez(x)|
&=\lf|\int_{\rn}g_\ez(y) D^\az \varphi_{2^{j(\ez;0)}}(x-y)\,dy\r|
 =\lf|\int_{B(x,2^{j(\ez;0)})}g_\ez(y)
  D^\az \varphi_{2^{j(\ez;0)}}(x-y)\,dy\r|\\
&=\lf|\int_{B(x,2^{j(\ez;0)})}[g_\ez(y)-f_{Q_{(x)}}]
   D^\az \varphi_{2^{j(\ez;0)}}(x-y)\,dy\r|\notag\\
&=\lf|\int_{B(x,2^{j(\ez;0)})}[g_\ez(y)-g_\ez(x)]
   D^\az \varphi_{2^{j(\ez;0)}}(x-y)\,dy\r|\notag\\
&\ls \int_{B(x,2^{j(\ez;0)})}\co(f;Q_{(x,y)})
  |D^\az \varphi_{2^{j(\ez;0)}}(x-y)|\,dy\notag\\
&\ls 2^{|\az|j(\ez;0)}\widetilde{\ez}
  \int_{B(x,2^{j(\ez;0)})}|D^\az \varphi_{2^{j(\ez;0)}}(x-y)|\,dy\notag\\
&\ls 2^{|\az|j(\ez;0)}\widetilde{\ez} 2^{-|\az|j(\ez;0)}\|D^\az\varphi\|_{L^1(\rn)}
 \ls \widetilde{\ez}, \notag
\end{align}
where the implicit positive constant is independent of $\widetilde{\ez}$ and $x$.

When $\az\in\zz_+^n$ and $|\az|\in\{2m\}_{m\in\nn}$,
we claim that $|D^\az h_\ez(x)|\ls\widetilde{\ez}$ as well
for any given $\az\in\zz_+^n$ with $|\az|\in\{2m\}_{m\in\nn}$ and any
$|x|>E_{(\az,n)}:=\sqrt n2^{j(\ez;|\az|+1+k_{|\az|+1})}$, with the implicit
positive constant independent of $\wz\ez$ and $x$.
Indeed, let $\psi\in C^\fz(\rn)$ and $M$ be a positive constant.
By the Taylor remainder theorem, we conclude that,
for any $x:=(x_1,\ldots,x_n)\in\rn$ with $|x|>M$ and
$y\in \mathcal{R}_x:=\{y:=(y_1,\ldots,y_n)\in\rn:\,\,x_iy_i\ge0\quad
 \forall\,i\in\{1,\ldots,n\}\}$,
\begin{align}\label{taylor}
\psi(x+y)=\psi(x)+\sum_{i=1}^n\frac{\partial}{\partial {x_i}}\psi(x)y_i
  +\sum_{\{\beta\in\zz^n_+:\,\,|\beta|=2\}} R_\beta(x,y) y^\beta,
\end{align}
where, for any $\beta:=(\bz_1,\ldots,\bz_n)\in\zz^n_+$ and $|\beta|=2$,
\begin{align*}
R_\beta(x,y):=\frac{|\beta|}{\beta!}\int_0^1(1-t)^{|\beta|-1}
  D^\beta \psi(x+ty)\,dt,
\end{align*}
with $\beta!:=\beta_1!\cdots\beta_n!$,
satisfies
\begin{align*}
\lf|R_\beta(x,y) \r|\le\max_{\{\beta\in\zz^n_+:\,\,|\beta|=2\}}
  \frac{1}{\beta!}\sup_{|z|>M}\lf|D^\beta\psi(z) \r|
\le\max_{\{\beta\in\zz^n_+:\,\,|\beta|=2\}}\|D^\beta\psi\|_{L^\fz(\{|z|>M\})},
\end{align*}
by using the following observation that, for any $t\in [0,1]$, $|x+ty|\ge |x|>M$.
To estimate $\|\frac{\partial}{\partial x_1}\psi\|_{L^\fz(\{|z|>M\})}$,
let
$$y\in\mathcal{R}_x^{(1)}:=
\lf\{y:=(y_1,\ldots,y_n)\in\rn:\,\,x_1y_1\ge0,\,
y_1\neq0,\, y_i=0\quad \forall\,i\in\{2,\ldots,n\}\r\};$$
then $|x+y|\ge|x|>M$, and \eqref{taylor} becomes
\begin{align*}
\psi(x+y)=\psi(x)+\frac{\partial}{\partial {x_1}}\psi(x)y_1+
  R_{(2,0,\dots,0)}(x,y) y_1^{2},
\end{align*}
which imply that
\begin{align*}
\lf|\frac{\partial}{\partial x_1}\psi(x)\r|&\le|\psi(x+y)-\psi(x)||y_1|^{-1}
+\lf|R_{(2,0,\dots,0)}(x,y)\r||y_1|\\
&\le 2\|\psi\|_{L^\fz(\{|z|>M\})}|y_1|^{-1}
 +\max_{\{\beta\in\zz^n_+:\,\,|\beta|=2\}}\|D^\beta\psi\|_{L^\fz(\{|z|>M\})}|y_1|.
\end{align*}
From the arbitrariness of both $x\in\{z\in\rn:\,\,|z|>M\}$
and $|y_1|\in(0,\fz)$, and
the AM-GM inequality\footnote{the inequality of arithmetic and geometric means, namely,
for any $a$, $b\in[0,\fz)$, $\frac{a+b}2\ge\sqrt{ab}$
and, moreover, the equality holds true when $a=b$.}, we then deduce that
\begin{align}\label{beta=2}
\lf\|\frac{\partial}{\partial x_1}\psi\r\|_{L^\fz(\{|z|>M\})}
&\le \inf_{|y_1|>0}\lf[2\|\psi\|_{L^\fz(\{|z|>M\})}|y_1|^{-1}
  +\max_{\{\beta\in\zz^n_+:\,\,|\beta|=2\}}\|D^\beta\psi\|_{L^\fz(\{|z|>M\})}|y_1|\r]\\
&= \sqrt{2\|\psi\|_{L^\fz(\{|z|>M\})}
  \max_{\{\beta\in\zz^n_+:\,\,|\beta|=2\}}\|D^\beta\psi\|_{L^\fz(\{|z|>M\})}}\notag.
\end{align}
By the same technique, we know that \eqref{beta=2} also holds true with $\frac{\partial}{\partial x_1}\psi$
replaced by $\frac{\partial}{\partial x_i}\psi$ for any $i\in\{2,\ldots,n\}$.
Based on this, we can now estimate $\|D^\az h_\ez\|_{L^\fz(\{|x|>E_{(\az,n)}\})}$
for any given $\az:=(\az_1,\ldots,\az_n)\in\zz_+^n$ with
$|\az|\in\{2m\}_{m\in\nn}$.
Without loss of generality, we may assume that $\az_1\neq0$,
and let $\widetilde{\az}:=(\az_1-1,\az_2,\ldots,\az_n)$.
Applying \eqref{beta=2} with $\psi:=D^{\widetilde{\az}}h_\ez$
and $M:=E_{(\az,n)}=\sqrt n2^{j(\ez;|\az|+1+k_{|\az|+1})}$,
we have
\begin{align*}
\lf\|D^\az h_\ez\r\|_{L^\fz(\{|x|>E_{(\az,n)}\})}
&=\lf\|\frac{\partial}{\partial x_1}D^{\widetilde{\az}} h_\ez\r\|_{L^\fz(\{|x|>E_{(\az,n)}\})} \\
&\le \sqrt{2\|D^{\widetilde{\az}}h_\ez\|_{L^\fz(\{|x|>E_{(\az,n)}\})}
    \max_{\{\beta\in\zz^n_+:\,\,|\beta|=|\az|+1\}}
    \|D^\beta h_\ez\|_{L^\fz(\{|x|>E_{(\az,n)}\})}}\\
&= \sqrt{2\|D^{\widetilde{\az}}h_\ez\|_{L^\fz(\{|x|>\sqrt n2^{j(\ez;|\az|+1+k_{|\az|+1})}\})}
    \max_{\{\beta\in\zz^n_+:\,\,|\beta|=|\az|+1\}}
    \|D^\beta h_\ez\|_{L^\fz(\{|x|>\sqrt n2^{j(\ez;|\az|+1+k_{|\az|+1})}\})}}\\
&\le \sqrt{2\|D^{\widetilde{\az}}h_\ez\|_{L^\fz(\{|x|>\sqrt n2^{j(\ez;|\az|-1+k_{|\az|-1})}\})}
    \max_{\{\beta\in\zz^n_+:\,\,|\beta|=|\az|+1\}}
    \|D^\beta h_\ez\|_{L^\fz(\{|x|>\sqrt n2^{j(\ez;|\az|+1)}\})}}\\
&= \sqrt{2\|D^{\widetilde{\az}}h_\ez\|_{L^\fz(\{|x|>E_{(\widetilde{\az},n)}\})}
    \max_{\{\beta\in\zz^n_+:\,\,|\beta|=|\az|+1\}}
    \|D^\beta h_\ez\|_{L^\fz(\{|x|>E_{(\bz,n)}\})}},
\end{align*}
where we used \eqref{kaz} in the last inequality.
From this and \eqref{odd}, we deduce that,
for any $\az:=(\az_1,\ldots,\az_n)\in\zz_+^n$ with $|\az|\in\{2m\}_{m\in\nn}$,
\begin{align}\label{even}
\lf\|D^\az h_\ez\r\|_{L^\fz(\{|x|>E_{(\az,n)}\})}\ls\widetilde{\ez},
\end{align}
where the implicit positive constant is independent of $\ez$.
This finishes the proof of the above claim.

Combining \eqref{odd} and \eqref{even}, we conclude that
$h_\ez\in B_\fz(\rn)$, which completes the proof of \textbf{Step iii)}
and hence that (ii) $\Longrightarrow$ (iii).

The proof that (iii) $\Longrightarrow$ (i) is obvious
from the definitions of $\XMO(\rn)$ and $\xMO(\rn)$.

This finishes the proof of Theorem \ref{xMO-char}.
\end{proof}

By Theorem \ref{xMO-char}, we can now completely answer
the open question posed in \cite{TX19}.

\begin{proof}[Proof of Corollary \ref{xMO-coro}]
To show this corollary,
it suffices to prove that there exists some $f\in\VMO(\rn)\setminus\XMO(\rn)$.
For any $x:=(x_1,\dots,x_n)\in\rn$, define
$$f(x_1,...,x_n):=\prod_{k=1}^n\sin(x_k).$$
Then it is easy to show that $f$ is uniform continuous and bounded in $\rn$,
which implies that $f\in\VMO(\rn)$.

Now, we claim that $f$ violates (ii)$_2$ of Theorem \ref{xMO-char}(ii) and hence
$f\notin \XMO(\rn)$.
Indeed, let $Q_0:=[-\frac\pi2,\frac\pi2]^n$ and,
for any $k\in\nn$, let $Q_k:=2k\pi+Q_0$.
Then, for any $k\in\nn$, we have
$$\int_{Q_k}f(x_1,\dots,x_n)\,dx_1\cdots dx_n
=\int_{2k\pi-\frac\pi2}^{2k\pi+\frac\pi2}\sin(x_1)\,dx_1
\cdots\int_{2k\pi-\frac\pi2}^{2k\pi+\frac\pi2}\sin(x_n)\,dx_n=0$$
and hence
\begin{align*}
\co(f;Q_0+2k\pi)&=\co(f;Q_k)=\fint_{Q_k}|f(x)-f_{Q_k}|\,dx=\fint_{Q_k}|f(x)|\,dx\\
&=\lf[\frac1\pi\int_{2k\pi-\frac\pi2}^{2k\pi+\frac\pi2}|\sin(t)|\,dt\r]^n
=\lf(\frac2\pi\r)^n,
\end{align*}
which can not tend to 0 as $k\to\fz$.
Thus, the above claim holds true,
which completes the proof of Corollary \ref{xMO-coro}.
\end{proof}

\begin{proposition}\label{xMO-replace}
Proposition \ref{CMO-char}{\rm (ii)} can be replaced by
\begin{itemize}
\item [{\rm(ii')}] $$\lim_{R\to\fz}\sup_{Q\cap Q(\vec 0_n,R)=\emptyset}\co(f;Q)=0,$$
\end{itemize}
where $Q(\vec 0_n,R)$ denotes the cube centered at $\vec 0_n$ with the side length $2R$.
However, ${\rm(ii)_2}$ of Theorem \ref{xMO-char}{\rm (ii)} can not be replaced by {\rm (ii')}.
\end{proposition}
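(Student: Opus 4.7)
The plan is to establish the two assertions separately. For the first equivalence, I would verify that, under conditions (i) and (iii) of Proposition \ref{CMO-char}, the condition (ii) of Proposition \ref{CMO-char} is equivalent to (ii'). The direction (ii')$\,\Longrightarrow\,$(ii) is immediate: given $\ez\in(0,\fz)$, choose $R\in(0,\fz)$ via (ii') so that $\co(f;Q')<\ez$ for all cubes $Q'$ disjoint from $Q(\vec 0_n,R)$; then for any fixed cube $Q\subset\rn$, $Q+x$ is disjoint from $Q(\vec 0_n,R)$ once $|x|$ is large enough, which yields $\co(f;Q+x)<\ez$. For the converse (ii)$\,\Longrightarrow\,$(ii'), I would argue by contradiction. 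Assume there exist $\ez_0\in(0,\fz)$ and cubes $\{Q_k\}_{k\in\nn}$ with $\dist(Q_k,\vec 0_n)\to\fz$ and $\co(f;Q_k)\ge\ez_0$. Condition (i) rules out $\ell(Q_k)\to0$ and condition (iii) rules out $\ell(Q_k)\to\fz$, so passing to a subsequence $\ell(Q_k)\to L_\ast\in(0,\fz)$. Fix a cube $Q_\ast$ of side length $L_\ast+2$ centered at $\vec 0_n$ and let $c_k$ be the center of $Q_k$. Then $Q_k\subset Q_\ast+c_k$ for $k$ large with $|c_k|\to\fz$ and $|Q_\ast+c_k|/|Q_k|$ bounded, and the standard nested-cube comparison $\co(f;Q_k)\le 2|Q_\ast+c_k|/|Q_k|\cdot\co(f;Q_\ast+c_k)$, combined with (ii) applied to the fixed cube $Q_\ast$, forces $\co(f;Q_k)\to0$, a contradiction.

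For the second assertion, I would exhibit an explicit counterexample to show that (ii') is strictly stronger than (ii)$_2$. Let $f(x):=\sin(\log(1+|x|^2))$ for any $x\in\rn$. This function is bounded and smooth, hence $f\in\BMO(\rn)$. An induction on $|\az|$ shows, for any $\az\in\zz_+^n\setminus\{\vec 0_n\}$, that $\lim_{|x|\to\fz}D^\az f(x)=0$: each application of a partial derivative yields a bounded trigonometric factor multiplied by a rational expression in $x$ with polynomial decay of strictly positive order. Hence $f\in B_\fz(\rn)\subset\XMO(\rn)$, and Theorem \ref{xMO-char} guarantees that $f$ automatically satisfies both (ii)$_1$ and (ii)$_2$. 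On the other hand, take cubes $Q_k:=[R_k,R_k e^{\pi/2}]^n$ with $R_k\to\fz$ chosen along a suitable subsequence so that $2\log R_k+\log n$ lies in a favorable residue class modulo $2\pi$. A direct computation shows that $\log(1+|x|^2)$ varies by approximately $\pi$ on $Q_k$, and $\co(f;Q_k)\ge c_0>0$ uniformly in $k$. Since $\dist(Q_k,\vec 0_n)\to\fz$, this violates (ii'), proving the inequivalence.

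The main technical obstacle lies in establishing the lower bound $\co(f;Q_k)\gs1$. After the radial change of variables $\theta:=\log(1+|x|^2)$, the problem reduces to bounding from below a weighted mean oscillation of $\sin\theta$ on an interval of length approximately $\pi$, where the Jacobian contributes a factor of the form $e^{\theta/2}$. Choosing $\theta_k$ modulo $2\pi$ so that $\sin\theta$ sweeps a full monotone arc from $+1$ down to $-1$ across this interval (for instance, $\theta_k\equiv\pi/2\pmod{2\pi}$) permits an explicit elementary lower bound on $\co(f;Q_k)$ independent of $k$. All remaining steps, including the induction verifying $f\in B_\fz(\rn)$ and the compactness argument in Part 1, are routine.
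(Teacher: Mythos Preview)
Your argument for the first assertion is correct and matches the paper's: the paper records exactly the chain (i)+(ii)+(iii) $\Longrightarrow$ (ii') $\Longrightarrow$ (ii), and you supply the details of the nontrivial implication (i)+(ii)+(iii) $\Longrightarrow$ (ii') by the contradiction/subsequence argument on side lengths.

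For the second assertion your approach is correct but genuinely different from the paper's. The paper takes the much simpler unbounded example $f(x)=\log|x|$ (smoothed near the origin) on $\rr$: then $f\in C^1(\rr)\cap\BMO(\rr)$ with $f'(x)=1/x\to 0$, so $f\in B_1(\rr)\subset\xMO(\rr)$; an explicit one-line primitive computation on $I_k=[e^k,e^{k+1}]$ yields
\[
\co(f;I_k)=\frac{2}{e-1}\lf(e^{\frac1{e-1}}-\frac{e}{e-1}\r),
\]
a positive constant independent of $k$, so (ii') fails while (ii)$_2$ holds. Your example $f(x)=\sin(\log(1+|x|^2))$ is bounded and even lies in $B_\fz(\rn)$; the lower bound $\co(f;Q_k)\gtrsim 1$ is most cleanly seen not via a radial change of variables but via the scaling $x=R_ky$ with $y\in[1,e^{\pi/2}]^n$, which reduces the question to the mean oscillation of the fixed nonconstant function $y\mapsto\cos(\log(|y|^2/n))$ on $[1,e^{\pi/2}]^n$. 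The paper's route is far shorter computationally; what your route buys is a \emph{bounded} counterexample in $B_\fz(\rn)$, which is precisely the feature flagged as missing in Remark~\ref{xMO-ques} following the proposition. You may wish to streamline your sketch of the lower bound along the scaling argument above, since the ``radial change of variables'' is not a genuine coordinate change in dimension $n\ge 2$.
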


\begin{proof}
Recall that Uchiyama \cite{Uchiyama78TohokuMathJ} stated Proposition \ref{CMO-char}
via (i), (ii) and (iii), while, in his proof, he proved that
Proposition \ref{CMO-char} with (ii) replaced by (ii') is true.
Indeed, this equivalence is a direct consequence of the following observation:
$$\rm (i)+(ii)+(iii)\ of\ Proposition\ \ref{CMO-char}
\Longrightarrow Proposition\ \ref{xMO-replace}(ii')
\Longrightarrow Proposition\ \ref{CMO-char}(ii).$$
To show that (ii)$_2$ of Theorem \ref{xMO-char}(ii) can not replaced
by Proposition \ref{xMO-replace}(ii'),
for simplicity, we only calculate a typical example in $\rr$.
Indeed, let $f(x):=\log (|x|)$ for any $x\in\rr$ with $|x|\ge1$,
and extend $f$ to $\rr$ smoothly.
Then $f\in C^1(\rr)\cap \BMO(\rr)$ and $\lim_{|x|\to\fz}f'(x)=0$,
which implies $f\in B_1(\rr)\subset\xMO(\rr)$.
On the other hand, for any $k\in\zz_+$ and interval $I_k:=[e^k,e^{k+1}]$,
we have
\begin{align*}
f_{I_k}&=\fint_{I_k}f(x)\,dx
=\frac1{e^{k+1}-e^{k}}\int_{e^{k}}^{e^{k+1}}\log(x)\,dx
=\frac{1}{(e-1)e^k}\lf[(k+1)e^{k+1}-e^{k+1}-(ke^k-e^k)\r]\\
&=\frac{1}{e-1}\lf[(k+1)e-e-(k-1)\r]
=\frac{1}{e-1}\lf(ke-k+1\r)=k+\frac1{e-1}
\end{align*}
and hence
\begin{align*}
\co(f;I_k)&=\fint_{I_k}|f(x)-f_{I_k}|\,dx
=\frac{1}{(e-1)e^k}\int_{e^{k}}^{e^{k+1}}\lf|\log(x)-\lf(k+\frac1{e-1}\r)\r|\,dx\\
&=\frac{1}{(e-1)e^k}\lf\{\int_{e^{k}}^{e^{k+\frac1{e-1}}}
  \lf[-\log(x)+\lf(k+\frac1{e-1}\r)\r]\,dx
  +\int_{e^{k+\frac1{e-1}}}^{e^{k+1}}\lf[\log(x)-\lf(k+\frac1{e-1}\r)\r]\,dx\r\}\\
&=\frac{1}{(e-1)e^k}\lf\{(k+1)e^{k+1}-e^{k+1}+ke^k-e^k
  -2\lf[\lf(k+\frac1{e-1}\r)e^{k+\frac1{e-1}}-e^{k+\frac1{e-1}}\r]\r.\\
&\lf.\qquad\qquad\qquad+\lf(k+\frac1{e-1}\r)\lf(2e^{k+\frac1{e-1}}-e^k-e^{k+1}\r)\r\}\\
&=\frac{1}{e-1}\lf\{(k+1)e-e+k-1
  -2\lf[\lf(k+\frac1{e-1}\r)e^{\frac1{e-1}}-e^{\frac1{e-1}}\r]
  +\lf(k+\frac1{e-1}\r)\lf(2e^{\frac1{e-1}}-1-e\r)\r\}\\
&=\frac{1}{e-1}\lf(-1+2e^{\frac1{e-1}}-\frac{e+1}{e-1}\r)
 =\frac{2}{e-1}\lf(e^{\frac1{e-1}}-\frac{e}{e-1}\r),
\end{align*}
which violates (ii') as long as $k$ satisfying $e^k>R$.
This finishes the proof of Proposition \ref{xMO-replace}.
\end{proof}

\begin{remark}\label{xMO-ques}
Observe that the counterexample in Proposition \ref{xMO-replace} is unbounded,
it is still unknown whether or not
(ii)$_1$ of Theorem \ref{xMO-char}(ii) $+$ Proposition \ref{xMO-replace}(ii')
is an equivalent characterization of $\MMO(\rn)$.
\end{remark}

\section{Proof of Theorem \ref{compact-thm}}\label{section3}

In this section, we prove Theorem \ref{compact-thm}
via several smooth truncated techniques.
Some of the ideas come from \cite{ClopCruz13AASFM}; see also \cite{TYY19}.
To begin with, we introduce the following smooth truncated function.
Let $\varphi_1\in C^\fz([0,\fz))$ satisfy
\begin{align}\label{phi1}
0\le\varphi_1\le1\quad\mathrm{and}\quad \varphi_1(x)=
\begin{cases}
1,\quad x\in[0,1],\\
0,\quad x\in[2,\fz).
\end{cases}
\end{align}
Moreover, for any $\eta\in(0,\fz)$, let
\begin{align}\label{Keta}
K_\eta(x,y,z):=K(x,y,z)\lf[1-\varphi_1\lf(\frac2\eta[|x-y|+|x-z|]\r)\r]
\end{align}
and, for any $f,\ g\in C_{\rm c}^\fz(\rn)$ and $x\notin\supp(f)\cap\supp(g)$,
\begin{align}\label{Teta}
T_\eta(f,g)(x):=\int_{\rn}K_\eta(x,y,z)f(y)g(z)\,dy\,dz.
\end{align}
Then
\begin{align}\label{Tetab}
\lf[b,T_\eta\r]_1(f,g)(x)=\int_{\rnn}[b(x)-b(y)]K_\eta(x,y,z)f(y)g(z)\,dy\,dz.
\end{align}

Recall that the \emph{bilinear Hardy--Littlewood maximal operator $\cm$}
is defined by setting, for any
$f,\ g\in L^1_{\loc}(\rn)$ and $x\in\rn$,
$$\cm (f,g)(x):=\sup_{\mathrm{cube}\,Q\ni x}\fint_Q|f(y)|\,dy \fint_Q|g(z)|\,dz,$$
where the supremum is taken over all the cubes $Q$ of $\rn$ containing $x$.
On $[b,T]_1$ and $[T_\eta,b]_1$, we have the following estimate via $\cm$.

\begin{lemma}\label{T-Teta}
There exists a positive constant $C$ such that, for any $b\in B_\fz(\rn)$,
$\eta\in(0,\infty)$, $f,\ g\in L^1_{\loc}(\rn)$ and $x\in\rn$,
$$\lf|[b,T]_1(f,g)(x)-\lf[b,T_\eta\r]_1(f,g)(x) \r|
\le C \eta \lf\|\nabla b\r\|_{L^\infty(\rn)} \cm (f,g)(x).$$
\end{lemma}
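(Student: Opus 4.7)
The plan is to expand the difference $[b,T]_1(f,g)(x)-[b,T_\eta]_1(f,g)(x)$ explicitly using \eqref{c1}, \eqref{Keta} and \eqref{Tetab}, which yields
$$[b,T]_1(f,g)(x)-[b,T_\eta]_1(f,g)(x)=\int_{\rnn}[b(x)-b(y)]K(x,y,z)\varphi_1\lf(\tfrac{2}{\eta}[|x-y|+|x-z|]\r)f(y)g(z)\,dy\,dz.$$
Since $\supp(\varphi_1)\subset[0,2]$ by \eqref{phi1}, the integrand vanishes outside the region $\{(y,z)\in\rnn:\,|x-y|+|x-z|\le\eta\}$. So it suffices to estimate the absolute value of the integral over this region.

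On this region I would combine two pointwise bounds. First, since $b\in B_\fz(\rn)\subset C^1(\rn)$ has $\nabla b\in L^\fz(\rn)$, the mean value theorem gives
$|b(x)-b(y)|\le\|\nabla b\|_{L^\fz(\rn)}|x-y|\le\|\nabla b\|_{L^\fz(\rn)}(|x-y|+|x-z|)$;
second, \eqref{sizeregular} with $|\az|=0$ yields $|K(x,y,z)|\le C(|x-y|+|x-z|)^{-2n}$. Packaging these together using the ``symmetric'' quantity $|x-y|+|x-z|$ in a common way, the modulus of the integrand is majorized by $C\|\nabla b\|_{L^\fz(\rn)}(|x-y|+|x-z|)^{1-2n}|f(y)||g(z)|$.

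The remaining step is a routine dyadic decomposition. For each $k\in\zz_+$, set $A_k:=\{(y,z)\in\rnn:\,2^{-k-1}\eta<|x-y|+|x-z|\le 2^{-k}\eta\}$. On $A_k$ both $|x-y|$ and $|x-z|$ are at most $2^{-k}\eta$, so $y,z\in Q(x,2^{-k}\eta)$; moreover $(|x-y|+|x-z|)^{1-2n}\le(2^{-k-1}\eta)^{1-2n}$ there. Using these facts and the definition of $\cm$, the integral over $A_k$ is dominated by
$$C\|\nabla b\|_{L^\fz(\rn)}(2^{-k-1}\eta)^{1-2n}\int_{Q(x,2^{-k}\eta)}|f(y)|\,dy\int_{Q(x,2^{-k}\eta)}|g(z)|\,dz\ls \|\nabla b\|_{L^\fz(\rn)}\,2^{-k}\,\eta\,\cm(f,g)(x),$$
where the implicit constant depends only on $n$. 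Summing the geometric series $\sum_{k\in\zz_+}2^{-k}=2$ over $k$ yields the desired bound. I do not anticipate any real obstacle here; the only mildly delicate point is recognizing that one should use $|x-y|+|x-z|$ (rather than $|x-y|$ alone) to bound $|b(x)-b(y)|$ so as to cleanly kill the singularity of $K$ of order $-2n$, leaving the integrable factor of order $1-2n$ that the dyadic sum can absorb.
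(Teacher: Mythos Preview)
Your proposal is correct and follows essentially the same approach as the paper: expand the difference to isolate the factor $\varphi_1(\tfrac{2}{\eta}[|x-y|+|x-z|])$, restrict to the region $|x-y|+|x-z|\le\eta$, apply the mean value theorem to $b(x)-b(y)$ together with the size condition \eqref{sizeregular}, and then perform the dyadic annular decomposition $\{2^{-k-1}\eta<|x-y|+|x-z|\le 2^{-k}\eta\}_{k\in\zz_+}$ to reduce to averages over cubes and hence to $\cm(f,g)(x)$. The only cosmetic difference is that the paper keeps the factor $|x-y|$ and bounds it by $\eta/2^j$ directly on each annulus, whereas you first replace $|x-y|$ by $|x-y|+|x-z|$; both lead to the same geometric sum.
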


\begin{proof}
For any $x\in\rn$,
by \eqref{phi1} through \eqref{Tetab}, and \eqref{sizeregular}, we have
\begin{align*}
&\lf|[b,T]_1(f,g)(x)-\lf[b,T_\eta\r]_1(f,g)(x) \r|\\
&\quad=\lf|\int_{\rnn}[b(x)-b(y)]K(x,y,z)
      \varphi_1\lf(\frac2\eta[|x-y|+|x-z|]\r)f(y)g(z)\,dy\,dz\r|\\
&\quad\le \int_{|x-y|+|x-z|\le\eta}|b(x)-b(y)||K(x,y,z)||f(y)g(z)|\,dy\,dz\\
&\quad\ls \lf\|\nabla b\r\|_{L^\infty(\rn)}\sum_{j=0}^\infty
      \int_{\frac\eta{2^{j+1}}<|x-y|+|x-z|\leq\frac\eta{2^j}}
      \frac{|x-y|}{(|x-y|+|x-z|)^{2n}}|f(y)g(z)|\,dy\,dz\\
&\quad\ls \lf\|\nabla b\r\|_{L^\infty(\rn)}\sum_{j=0}^\infty
      \frac\eta{2^j}\frac1{(\frac\eta{2^{j+1}})^{2n}}
      \int_{Q(x,\frac\eta{2^j})\times Q(x,\frac\eta{2^j})}|f(y)g(z)|\,dy\,dz\\
&\quad\sim \lf\|\nabla b\r\|_{L^\infty(\rn)} \eta\sum_{j=0}^\infty
      \frac1{2^j}\fint_{Q(x,\frac\eta{2^j})}|f(y)|\,dy\fint_{Q(x,\frac\eta{2^j})}|g(z)|\,dz\\
&\quad\ls \lf\|\nabla b\r\|_{L^\infty(\rn)} \eta\sum_{j=0}^\infty
      \frac1{2^j}\cm(f,g)(x)
\sim \lf\|\nabla b\r\|_{L^\infty(\rn)} \eta \cm(f,g)(x),
\end{align*}
where $Q(x,\frac\eta{2^j})$ denotes the cube centered at $x$ with
the side length $2\frac\eta{2^j}$.
This finishes the proof of Lemma \ref{T-Teta}.
\end{proof}

We also need the following result on the relative compactness of a set
in weighted Lebesgue spaces, which is just \cite[Theorem 1]{XYY18}.

\begin{lemma}\label{FK}
Let $w$ be a weight on $\rn$. Assume that $w^{-1/(p_0-1)}$ is also a weight on $\rn$
for some $p_0\in(1,\fz)$. Let $p\in(0,\fz)$ and $\ce$ be a subset of $L_w^p(\rn)$.
Then $\ce$ is relatively compact in $L_w^p(\rn)$ if
the set $\ce$ satisfies the following three conditions:
\begin{itemize}
\item[{\rm(i)}] $\ce$ is bounded, namely,
$$\sup_{f\in\ce}\|f\|_{\Lpw}<\infty;$$

\item[{\rm(ii)}] $\ce$ uniformly vanishes at infinity, namely,
for any $\ez\in(0,\infty)$,
there exists some positive constant $A$ such that, for any $f\in\ce$,
$$\lf\|f\r\|_{L^p_w(\{|x|>A\})}<\ez;$$

\item[{\rm(iii)}] $\ce$ is uniformly equicontinuous, namely,
for any $\ez\in(0,\infty)$,
there exists some positive constant $\rho$ such that,
for any $f\in\ce$ and $t \in \rn$ with $|t|\in[0,\rho)$,
$$\|f(\cdot+t)-f(\cdot)\|_{\Lpw}<\ez.$$
\end{itemize}
\end{lemma}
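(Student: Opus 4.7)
The plan is to prove a weighted version of the Fr\'echet--Kolmogorov (Kolmogorov--Riesz) characterization of relative compactness in $L^p$, via a standard three-step approximation scheme: truncate at infinity, smooth at small scale, and reduce the remaining bounded, smooth family to classical Arzel\`a--Ascoli.

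First, I would fix $\ez\in(0,\fz)$ and, by hypothesis (ii), choose $A\in(0,\fz)$ so that $\|f\chi_{\{|x|>A\}}\|_{L^p_w(\rn)}<\ez$ uniformly for $f\in\ce$. This reduces matters to showing relative compactness of the truncated family $\{f\chi_{Q(\vec 0_n,A)}\}_{f\in\ce}$ in $L^p_w(Q(\vec 0_n,A))$. Next, I would convolve each truncated function with a standard nonnegative mollifier $\varphi_\dz$ of total mass one supported in $B(\vec 0_n,\dz)$, and bound $\|f\ast\varphi_\dz-f\|_{L^p_w(\rn)}$ by an integral average of the translation modulus $\|f(\cdot-t)-f\|_{L^p_w(\rn)}$, which by (iii) is uniformly small in $f\in\ce$ once $\dz$ is small. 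For $p\in[1,\fz)$ this is immediate from Minkowski's integral inequality; for $p\in(0,1)$ the same conclusion would be obtained through the $p$-subadditivity $\|u+v\|_{L^p_w(\rn)}^p\le\|u\|_{L^p_w(\rn)}^p+\|v\|_{L^p_w(\rn)}^p$ applied after discretizing $\varphi_\dz$ into finite averages of small translates.

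After these two reductions, what is left is the smoothed truncated family $\{(f\chi_{Q(\vec 0_n,A+1)})\ast\varphi_\dz\}_{f\in\ce}$, whose members are smooth and supported in the fixed cube $Q(\vec 0_n,A+1+\dz)$. Using hypothesis (i), Young's convolution inequality, and H\"older's inequality with the conjugate pair $(p_0,p_0')$ against $w$ and $w^{-1/(p_0-1)}$, I would show that this family is uniformly bounded in the sup norm and uniformly equicontinuous. Arzel\`a--Ascoli then yields relative compactness in $C(Q(\vec 0_n,A+1+\dz))$, and the local integrability of $w$ promotes this to relative compactness in $L^p_w$ of the same cube. A standard $\ez/3$ argument (respectively, its $p$-subadditive analogue when $p\in(0,1)$) then covers $\ce$ by finitely many $L^p_w$-balls of arbitrarily small radius, showing that $\ce$ is totally bounded and hence relatively compact.

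The main obstacle I anticipate is the quasi-norm regime $p\in(0,1)$, where the triangle inequality fails: every combination step and every integral-type bound must be recast through $p$-subadditivity, and the convolution argument in particular must be reorganized as a finite-sum estimate. The hypothesis that $w^{-1/(p_0-1)}$ is a weight enters precisely at the places where one needs to translate unweighted pointwise bounds (obtained from mollification and Arzel\`a--Ascoli) into weighted $L^p_w$ bounds; combined with local integrability of $w$, it supplies the H\"older duality needed to make both the smoothing step and the Arzel\`a--Ascoli step effective in the weighted setting.
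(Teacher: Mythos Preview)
The paper does not supply a proof of this lemma; it is quoted from \cite[Theorem~1]{XYY18}, and Remark~\ref{FK-rem} notes that the $A_p$-weight case (all that Theorem~\ref{compact-thm} actually uses) already appears in \cite[Theorem~5]{ClopCruz13AASFM}. Your outline is the classical Kolmogorov--Riesz scheme and is essentially correct for $p\ge1$.

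For $p\in(0,1)$, though, both of your proposed devices fail. First, membership in $L^p_w(\rn)$ need not imply $f\in L^1_{\loc}(\rn)$ even under the hypothesis on $w$: with $n=1$, $w$ equal to $|x|^{-1/2}$ on $(0,1]$ and $1$ elsewhere, $p=1/4$, and $f(x):=x^{-1}\chi_{(0,1]}(x)$, one has $f\in L^p_w$ and (iii) holds, yet $f\notin L^1_{\loc}$, so $(f\chi_Q)\ast\varphi_\dz$ is undefined and your H\"older step for the sup bound (which returns $\|f\|_{L^{p_0}_w}$, not $\|f\|_{L^p_w}$) cannot close. Second, discretizing $\varphi_\dz$ into atoms with weights $\{c_i\}_{i=1}^N$ summing to $1$ and applying $p$-subadditivity gives only $\|\sum_i c_i[f(\cdot-t_i)-f]\|_{L^p_w}^p\le(\sum_i c_i^p)\sup_{|t|<\dz}\|f(\cdot-t)-f\|_{L^p_w}^p$, and $\sum_i c_i^p\ge N^{1-p}\to\fz$ as $N\to\fz$, so refining the discretization destroys the bound. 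A repair valid for every $p\in(0,\fz)$ replaces mollification by the finite-rank map $P_\dz f:=\sum_j c_j\chi_{Q_j}$ onto step functions on a grid $\{Q_j\}$ of mesh $\dz$, with $c_j$ the $L^p_w(Q_j)$-best constant approximant to $f$; averaging the $p$-th power $\|f(\cdot-t)-f\|_{L^p_w}^p$ over $|t|\ls\dz$ and applying Fubini then gives $\|f-P_\dz f\|_{L^p_w}^p\ls\sup_{|t|\ls\dz}\|f(\cdot-t)-f\|_{L^p_w}^p$, uniformly small by (iii), while $P_\dz\ce$ is bounded in a fixed finite-dimensional subspace by (i), hence totally bounded.
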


\begin{remark}\label{FK-rem}
If $w$ is a classical $A_p(\rn)$ weight for some $p\in(1,\fz)$,
then the sufficiency of Lemma \ref{FK}
was first obtained in \cite[Theorem 5]{ClopCruz13AASFM}, which is needed
in the proof of Theorem \ref{compact-thm}.
\end{remark}

Let $\textbf{w}:=(w_1,w_2)\in \textbf{A}_{\textbf{p}}(\rn)$
and $w:=w_1^{p/p_1}w_2^{p/p_2}$ be as in Theorem \ref{compact-thm}.
From \cite[Theorem 3.6]{LOPTT09AM}, it follows that $w\in A_{2p}(\rn)$.
By this and Remark \ref{FK-rem}, we are now able to use Lemma \ref{FK}
to prove Theorem \ref{compact-thm} as follows.

\begin{proof}[Proof of Theorem \ref{compact-thm}]
Without loss of generality, it may suffice to prove this theorem
for the first entry $[T_\eta,b]_1$.
When $b\in \XMO(\rn)=\xMO(\rn)$ (see Theorem \ref{xMO-char}), from the definition of $\xMO(\rn)$,
we deduce that, for any $\ez\in(0,\infty)$,
there exists $b^{(\ez)}\in B_1(\rn)$ such that
$\|b-b^{(\ez)}\|_{\BMO(\rn)}<\ez.$
Then, by the boundedness of $[b,T]_1$ from
$\Lpwa\times \Lpwb$ to $\Lpw$
(see \cite[Theorem 3.18]{LOPTT09AM} and also \cite{BMMST19}
for more general results),
we obtain, for any $(f,g)\in\Lpwa\times\Lpwb$,
\begin{align*}
\lf\|[b,T]_1(f,g)-\lf[b^{(\ez)},T\r]_1(f,g)\r\|_{\Lpw}
&=\lf\|\lf[b-b^{(\ez)},T\r]_1(f,g)\r\|_{\Lpw}\\
&\lesssim\lf\|b-b^{(\ez)}\r\|_{\BMO(\rn)}\|f\|_{\Lpwa}\|g\|_{\Lpwb}\\
&\lesssim \ez \|f\|_{\Lpwa}\|g\|_{\Lpwb}.
\end{align*}
Moreover, using Lemma \ref{T-Teta} and the boundedness of $\cm$
from $\Lpwa\times \Lpwb$ to $\Lpw$ (\cite[Theorem 3.3]{LOPTT09AM}),
we conclude that
$$\lim_{\eta\to0}\lf\|[b,T]_1-\lf[b,T_\eta\r]_1\r\|_{\Lpwa\times \Lpwb\to\Lpw}=0,$$
where $\|[b,T]_1-[b,T_\eta]_1\|_{\Lpwa\times \Lpwb\to\Lpw}$ denotes
the operator norm of $[b,T]_1-[b,T_\eta]_1$ from $\Lpwa\times \Lpwb$ to $\Lpw$.
Thus, to prove that $[b,T]_1$ is compact for any $b\in\XMO(\rn)$,
by \cite[p.\,278, Theorem(iii)]{Yosida},
it suffices to show that $[b,T_\eta]_1$ is compact for any $b\in B_1(\rn)$
and $\eta\in(0,\fz)$ small enough.
To this end, by the definition of compact operators,
Lemma \ref{FK} and Remark \ref{FK-rem}(ii),
we know that it suffices to show that,
for any fixed $b\in B_1(\rn)$, $\eta\in(0,\fz)$ small enough
and $\ce\subset\Lpwa\times\Lpwb$ bounded,
$[b,T_\eta]_1\ce$ satisfies (i), (ii) and (iii) of Lemma \ref{FK}.
In what follows, we show these in order.

To begin with, we show that $[b,T_\eta]_1\ce$ satisfies Lemma \ref{FK}(i).
To this end, we first claim that $T_\eta$ is a Calder\'on--Zygmund operator,
namely, $K_\eta$ satisfies \eqref{sizeregular}.
Indeed, from $0\le\varphi\le1$ and hence $|K_\eta|\le|K|$,
it follows that the size condition [$\az:=\vec0_{3n}$
in \eqref{sizeregular}] holds true trivially.
When $|\az|=1$, without loss of generality, we may assume that $\az:=(1,\overbrace{0,\dots,0}^{3n-1\ {\rm times}})$.
Then, for any $x,\ y,\ z\in\rn$ with $x\neq y$ or $x\neq z$,
by \eqref{phi1} and \eqref{sizeregular}, we have
\begin{align*}
\lf|\frac{\partial}{\partial x_1} K_\eta(x,y,z)\r|
&\le\lf|\frac{\partial}{\partial x_1} K(x,y,z)\r|\lf|1-\varphi_1\lf(\frac2\eta[|x-y|+|x-z|]\r)\r|\\
&\quad+|K(x,y,z)|\lf|\frac{\partial}{\partial x_1}
  \lf[1-\varphi_1\lf(\frac2\eta[|x-y|+|x-z|]\r)\r]\r|\\
&\le\lf|\frac{\partial}{\partial x_1} K(x,y,z)\r|+|K(x,y,z)|\lf|\varphi_1'\lf(\frac2\eta[|x-y|+|x-z|]\r)\r|
  \frac2\eta\lf|\frac{\partial}{\partial x_1}(|x-y|+|x-z|)\r|\\
&\lesssim\frac1{(|x-y|+|x-z|)^{2n+1}}\\
&\quad+\frac{\frac2\eta(|x-y|+|x-z|)}{(|x-y|+|x-z|)^{2n+1}}
  \lf|\varphi_1'\lf(\frac2\eta[|x-y|+|x-z|]\r)\r|
  \lf|\frac{x_1-y_1}{|x-y|}+\frac{x_1-z_1}{|x-z|}\r|\\
&\lesssim\frac1{(|x-y|+|x-z|)^{2n+1}}.
\end{align*}
Therefore, $K_\eta$ satisfies \eqref{sizeregular} and hence
$T_\eta$ is a Calder\'on--Zygmund operator and,
moreover, the kernel constant is independent of $\eta$.
From this, $b\in\XMO(\rn)\subset\BMO(\rn)$,
the boundedness characterization of Calder\'on--Zygmund commutators
on weighted Lebesgue spaces (\cite[Theorem 3.18]{LOPTT09AM}),
and $(f,g)\in\ce$ bounded, we deduce that
$$\lf\|\lf[b,T_\eta\r]_1(f,g)\r\|_{\Lpw}\ls\|f\|_{\Lpwa}\|g\|_{\Lpwb}<\fz,$$
which implies that $[b,T_\eta]_1\ce$ satisfies Lemma \ref{FK}(i).

Next, we show that $[b,T_\eta]_1\ce$ satisfies Lemma \ref{FK}(ii). In what follows,
we use the symbol $E\gg D$ to denote that $E$ is much larger than $D$.
For fixed $A\gg1$ and fixed $x\in\rn$ with $|x|>A$,
we first split the truncated function $1-\varphi_1$ into the following two parts.
Let $\varphi_2,\ \varphi_3\in C^\fz([0,\fz))$ satisfy
\begin{align}\label{phi2}
0\le\varphi_2,\ \varphi_3\le1, \quad \varphi_2+\varphi_3=1-\varphi_1,
\end{align}
\begin{align}\label{phi3}
\varphi_2(x)=
\begin{cases}
1, \quad x\in\lf[2,\displaystyle\frac A2\r],\\
0, \quad x\in[0,1]\bigcup\lf[\displaystyle\frac A2+1,\fz\r)
\end{cases}
\mathrm{and}\quad \varphi_3(x)=
\begin{cases}
0, \quad x\in\lf[0,\displaystyle\frac A2\r],\\
1, \quad x\in\lf[\displaystyle\frac A2+1,\fz\r).
\end{cases}
\end{align}
Accordingly, for any $(f,g)\in\Lpwa\times\Lpwb$ and $x\in\rn$,
we split $[b,T_\eta]_1(f,g)(x)$ into the following three parts:
\begin{align}\label{L123}
\lf|\lf[b,T_\eta\r]_1(f,g)(x)\r|
&=\lf|\int_{\rnn}[b(x)-b(y)]K_\eta(x,y,z)f(y)g(z)\,dy\,dz \r|\\
&=\lf|\int_{\rnn}\nabla b(\xi)\cdot(x-y) K_\eta(x,y,z)\notag\r.\\
&\qquad\lf.\vphantom{\int_{\rnn}\nabla b(\xi)\cdot(x-y) K_\eta(x,y,z)}
  \times\lf[(\varphi_1+\varphi_2+\varphi_3)(|x-y|+|x-z|)\r]f(y)g(z)\,dy\,dz \r|\notag\\
&\le\int_{\rnn}|\nabla b(\xi)||x-y|\lf|K_\eta(x,y,z)\r|
  \varphi_1(|x-y|+|x-z|)|f(y)g(z)|\,dy\,dz \notag\\
&\quad+\int_{\rnn}|\nabla b(\xi)||x-y|\lf|K_\eta(x,y,z)\r|
  \varphi_2(|x-y|+|x-z|)|f(y)g(z)|\,dy\,dz \notag\\
&\quad+\int_{\rnn}|\nabla b(\xi)||x-y|\lf|K_\eta(x,y,z)\r|
  \varphi_3(|x-y|+|x-z|)|f(y)g(z)|\,dy\,dz \notag\\
&=:\mathrm{L}_1(x)+\mathrm{L}_2(x)+\mathrm{L}_3(x),\notag
\end{align}
where we applied the mean value theorem to $b(x)-b(y)$,
and $\xi$ is on the segment $\overline{xy}$ connecting $x$ and $y$.
We then estimate $\mathrm{L}_1(x)$ to $\mathrm{L}_3(x)$ in order.

To estimate $\mathrm{L}_1(x)$ as well as $\|\mathrm{L}_1\|_{L^p_w(\{|x|>A\})}$,
we notice that $\xi\in\overline{xy}$ and hence
\begin{align}\label{xixy}
|x-\xi|\le|x-y|.
\end{align}
Meanwhile, by \eqref{phi1}, we know that $\supp(\varphi_1)\subset[0,2]$ and hence
\begin{align}\label{le2}
|x-y|+|x-z|\le2.
\end{align}
From \eqref{xixy}, \eqref{le2} and $|x|>A\gg1$, it follows that
\begin{align}\label{xiA2}
|\xi|\ge|x|-|x-\xi|\ge|x|-|x-y|\ge|x|-(|x-y|+|x-z|)\ge|x|-2>\frac A2.
\end{align}
By \eqref{xiA2}, \eqref{sizeregular}, $0\le\varphi\le1$
and hence $|K_\eta|\le|K|$, we conclude that,
for any $(f,g)\in\Lpwa\times\Lpwb$ and $x\in\rn$,
\begin{align}\label{L1toJ1}
\mathrm{L}_1(x)
\ls&\,\sup_{|\xi|>\frac A2}|\nabla b(\xi)|
 \int_{\rnn}\frac{|x-y|\varphi_1(|x-y|+|x-z|)}{(|x-y|+|x-z|)^{2n}}|f(y)g(z)|\,dy\,dz\\
\ls&\,\sup_{|\xi|>\frac A2}|\nabla b(\xi)|
 \int_{\rnn}\frac{\varphi_1(|x-y|+|x-z|)}{(|x-y|+|x-z|)^{2n-1}}|f(y)g(z)|\,dy\,dz\notag\\
:=&\,\sup_{|\xi|>\frac A2}|\nabla b(\xi)|
 \int_{\rnn}\mathcal{K}_1(x,y,z)|f(y)g(z)|\,dy\,dz
=:\sup_{|\xi|>\frac A2}|\nabla b(\xi)|\mathrm{J}_1(x).\notag
\end{align}
We now claim that $\mathrm{J}_1$ is a Calder\'on--Zygmund operator,
namely, $\mathcal{K}_1$ satisfies \eqref{sizeregular}.
Indeed, by \eqref{phi1}, we have, for any $x,\ y,\ z\in\rn$ with $x\neq y$ or $x\neq z$,
\begin{align*}
|\mathcal{K}_1(x,y,z)|&=\frac{\varphi_1(|x-y|+|x-z|)}{(|x-y|+|x-z|)^{2n-1}}
=\frac{(|x-y|+|x-z|)\varphi_1(|x-y|+|x-z|)}{(|x-y|+|x-z|)^{2n}}\\
&\le\frac{2}{(|x-y|+|x-z|)^{2n}}
\end{align*}
and hence the size condition [$\az=\vec0_{3n}$ in \eqref{sizeregular}] holds true.
When $|\az|=1$, without loss of generality, we may assume that $\az:=(1,\overbrace{0,\dots,0}^{3n-1\ {\rm times}})$.
Then, for any $x,\ y,\ z\in\rn$ with $x\neq y$ or $x\neq z$, we obtain
\begin{align*}
\lf|\frac{\partial}{\partial x_1}\ck_1(x,y,z)\r|
&\ls\frac{|\varphi_1'(|x-y|+|x-z|)(\frac{x_1-y_1}{|x-y|}+\frac{x_1-z_1}{|x-z|})
  (|x-y|+|x-z|)^2|}{(|x-y|+|x-z|)^{2n+1}}\\
&\quad+\frac{|\varphi_1(|x-y|+|x-z|)(2n-1)(|x-y|+|x-z|)(\frac{x_1-y_1}{|x-y|}+\frac{x_1-z_1}{|x-z|})|}
  {(|x-y|+|x-z|)^{2n+1}}\\
&\ls\frac{\|\varphi_1'\|_{L^\fz(\rn)}+1}
  {(|x-y|+|x-z|)^{2n+1}}
 \ls\frac1{(|x-y|+|x-z|)^{2n+1}}.
\end{align*}
Therefore, $\ck_1$ satisfies \eqref{sizeregular} and hence $\mathrm{J}_1$ is a
Calder\'on--Zygmund operator, which shows the above claim.
From this claim, the boundedness of Calder\'on--Zygmund operators
on weighted Lebesgue spaces
(\cite[Corollary 3.9]{LOPTT09AM}) and \eqref{L1toJ1}, we deduce that, for any $(f,g)\in\Lpwa\times\Lpwb$,
\begin{align}\label{L1ok}
\|\mathrm{L}_1\|_{L^p_w(\{|x|>A\})}\ls\sup_{|\xi|>\frac A2}|\nabla b(\xi)|\|\mathrm{J}_1\|_{\Lpw}
\ls\sup_{|\xi|>\frac A2}|\nabla b(\xi)|\|f\|_{\Lpwa}\|g\|_{\Lpwb}.
\end{align}

Next, we estimate $\mathrm{L}_2(x)$ as well as
$\|\mathrm{L}_2\|_{L^p_w(\{|x|>A\})}$.
By \eqref{phi2} and \eqref{phi3}, we know that
$\supp(\varphi_2)\subset[1,\frac A2]$ and hence
\begin{align}\label{leA2}
|x-y|+|x-z|\le \frac A2.
\end{align}
From \eqref{xixy}, \eqref{le2} and $|x|>A\gg1$, we deduce that
\begin{align*}
|\xi|\ge|x|-|x-\xi|\ge|x|-|x-y|\ge|x|-(|x-y|+|x-z|)\ge|x|-\frac A2>\frac A2,
\end{align*}
and hence \eqref{xiA2} still holds true.
By \eqref{xiA2}, \eqref{decay}, $0\le\varphi\le1$
and hence $|K_\eta|\le|K|$, we further conclude that,
for any $(f,g)\in\Lpwa\times\Lpwb$ and $x\in\rn$,
\begin{align}\label{L2toJ2}
\mathrm{L}_2(x)
\ls&\,\sup_{|\xi|>\frac A2}|\nabla b(\xi)|
 \int_{\rnn}\frac{|x-y|\varphi_2(|x-y|+|x-z|)}{(|x-y|+|x-z|)^{2n+2}}|f(y)g(z)|\,dy\,dz\\
\ls&\,\sup_{|\xi|>\frac A2}|\nabla b(\xi)|
 \int_{\rnn}\frac{\varphi_2(|x-y|+|x-z|)}{(|x-y|+|x-z|)^{2n+1}}|f(y)g(z)|\,dy\,dz\notag\\
:=&\,\sup_{|\xi|>\frac A2}|\nabla b(\xi)|
 \int_{\rnn}\mathcal{K}_2(x,y,z)|f(y)g(z)|\,dy\,dz
=:\sup_{|\xi|>\frac A2}|\nabla b(\xi)|\mathrm{J}_2(x).\notag
\end{align}
We now claim that $\mathrm{J}_2$ is also a Calder\'on--Zygmund operator,
namely, $\mathcal{K}_2$ satisfies \eqref{sizeregular}.
Indeed, by \eqref{phi2} and \eqref{phi3}, we obtain,
for any $x,\ y,\ z\in\rn$ with $x\neq y$ or $x\neq z$,
$$|\mathcal{K}_2(x,y,z)|=\frac{\varphi_2(|x-y|+|x-z|)}{(|x-y|+|x-z|)^{2n+1}}
\le\frac{1}{(|x-y|+|x-z|)^{2n}}$$
and hence the size condition [$\az=\vec0_{3n}$
in \eqref{sizeregular}] holds true.
When $|\az|=1$, without loss of generality,
we may assume that $\az:=(1,\overbrace{0,\dots,0}^{3n-1\ {\rm times}})$.
Then, for any $x,\ y,\ z\in\rn$ with $x\neq y$ or $x\neq z$,
it holds true that
\begin{align*}
\lf|\frac{\partial}{\partial x_1}\ck_2(x,y,z)\r|
&\ls\frac{|\varphi_2'(|x-y|+|x-z|)(\frac{x_1-y_1}{|x-y|}+\frac{x_1-z_1}{|x-z|})|}
  {(|x-y|+|x-z|)^{2n+1}}\\
&\quad+\frac{|\varphi_2(|x-y|+|x-z|)(2n+1)(|x-y|+|x-z|)^{-1}
  (\frac{x_1-y_1}{|x-y|}+\frac{x_1-z_1}{|x-z|})|}{(|x-y|+|x-z|)^{2n+1}}\\
&\ls\frac{\|\varphi_2'\|_{L^\fz(\rn)}+1}{(|x-y|+|x-z|)^{2n+1}}
 \ls\frac1{(|x-y|+|x-z|)^{2n+1}},
\end{align*}
where the implicated positive constant is independent of $A$.
Therefore, $\ck_2$ satisfies \eqref{sizeregular}, and hence $\mathrm{J}_2$ is a
Calder\'on--Zygmund operator, which shows the above claim holds true.
Using this claim, the boundedness of Calder\'on--Zygmund operators
on weighted Lebesgue spaces
(\cite[Corollary 3.9]{LOPTT09AM}) and \eqref{L2toJ2},
we find that, for any $(f,g)\in\Lpwa\times\Lpwb$,
\begin{align}\label{L2ok}
\|\mathrm{L}_2\|_{L^p_w(\{|x|>A\})}\ls\sup_{|\xi|>\frac A2}|\nabla b(\xi)|\|\mathrm{J}_2\|_{\Lpw}
\ls\sup_{|\xi|>\frac A2}|\nabla b(\xi)|\|f\|_{\Lpwa}\|g\|_{\Lpwb}
\end{align}
with the implicit positive constant independent of $A$.

Now, we estimate $\mathrm{L}_3(x)$ as well as $\|\mathrm{L}_3\|_{L^p_w(\{|x|>A\})}$.
From \eqref{decay}, $0\le\varphi\le1$ and hence $|K_\eta|\le|K|$,
we deduce that, for any $(f,g)\in\Lpwa\times\Lpwb$ and $x\in\rn$,
\begin{align}\label{L3toJ3}
\mathrm{L}_3(x)
\ls&\,\|\nabla b\|_{L^\fz(\rn)}
 \int_{\rnn}\frac{|x-y|\varphi_3(|x-y|+|x-z|)}{(|x-y|+|x-z|)^{2n+2}}|f(y)g(z)|\,dy\,dz\\
\ls&\,\|\nabla b\|_{L^\fz(\rn)}
 \int_{\rnn}\frac{\varphi_3(|x-y|+|x-z|)}{(|x-y|+|x-z|)^{2n+1}}|f(y)g(z)|\,dy\,dz\notag\\
:=&\,\|\nabla b\|_{L^\fz(\rn)}
 \int_{\rnn}\mathcal{K}_3(x,y,z)|f(y)g(z)|\,dy\,dz
=:\|\nabla b\|_{L^\fz(\rn)}\mathrm{J}_3(x).\notag
\end{align}
We now claim that $\mathrm{J}_3$ is also a Calder\'on--Zygmund operator,
namely, $\mathcal{K}_3$ satisfies \eqref{sizeregular}
with the kernel constant $O(A^{-1})$.
Indeed, by \eqref{phi2} and \eqref{phi3}, we have,
for any $x,\ y,\ z\in\rn$ with $x\neq y$ or $x\neq z$,
$$|\mathcal{K}_3(x,y,z)|=\frac{\varphi_2(|x-y|+|x-z|)}{(|x-y|+|x-z|)^{2n+1}}
\le\frac{2}{A(|x-y|+|x-z|)^{2n}}$$
and hence the size condition [$\az=\vec0_{3n}$
in \eqref{sizeregular}] holds true with the kernel constant $O(A^{-1})$.
When $|\az|=1$, without loss of generality,
we may assume that $\az:=(1,\overbrace{0,\dots,0}^{3n-1\ {\rm times}})$.
Then, for any $x,\ y,\ z\in\rn$ with $x\neq y$ or $x\neq z$,
\begin{align*}
\lf|\frac{\partial}{\partial x_1}\ck_3(x,y,z)\r|
&\ls\frac{|\varphi_3'(|x-y|+|x-z|)(\frac{x_1-y_1}{|x-y|}+\frac{x_1-z_1}{|x-z|})|}
  {(|x-y|+|x-z|)^{2n+1}}\\
&\quad+\frac{|\varphi_3(|x-y|+|x-z|)(2n+1)(|x-y|+|x-z|)^{-1}(\frac{x_1-y_1}{|x-y|}+\frac{x_1-z_1}{|x-z|})|}
  {(|x-y|+|x-z|)^{2n+1}}\\
&\ls\frac{\|\varphi_3'\|_{L^\fz(\rn)}+(\frac A2)^{-1}}
  {(|x-y|+|x-z|)^{2n+1}}
 \ls\frac{1}{A(|x-y|+|x-z|)^{2n+1}}.
\end{align*}
Thus, $\ck_3$ satisfies \eqref{sizeregular} with the kernel constant $O(A^{-1})$
and hence $\mathrm{J}_3$ is a Calder\'on--Zygmund operator,
which shows the above claim.
By this claim, the boundedness of Calder\'on--Zygmund operators
on weighted Lebesgue spaces
(\cite[Corollary 3.9]{LOPTT09AM}) and \eqref{L3toJ3}, we conclude that,
for any $(f,g)\in\Lpwa\times\Lpwb$,
\begin{align}\label{L3ok}
\|\mathrm{L}_3\|_{L^p_w(\{|x|>A\})}\ls\|\nabla b\|_{L^\fz(\rn)}\|\mathrm{J}_2\|_{\Lpw}
\ls\frac{\|\nabla b\|_{L^\fz(\rn)}}{A}\|f\|_{\Lpwa}\|g\|_{\Lpwb}.
\end{align}

To sum up, for any given $\ez\in(0,\fz)$,
there exists a positive constant $A$ large enough such that
both
$$\sup_{|\xi|>\frac A2}|\nabla b(\xi)|\ls \ez \quad {\rm and}\quad
\frac{\|\nabla b\|_{L^\fz(\rn)}}{A}\ls \ez$$
hold true, where the implicit positive constants are independent of $A$ and $\ez$.
From this, \eqref{L123}, \eqref{L1ok}, \eqref{L2ok}, \eqref{L3ok}, $b\in B_1(\rn)$
and $(f,g)\in\ce$ bounded, we deduce that
$$\lf\|\lf[b,T_\eta\r]_1(f,g)\r\|_{L^p_w(\{|x|>A\})}
\le\sum_{k=1}^3\|\mathrm{L}_k\|_{L^p_w(\{|x|>A\})}<\ez,$$
which implies that $[b,T_\eta]_1\ce$ satisfies Lemma \ref{FK}(ii).

It remains to prove that $[b,T_\eta]_1\ce$ also satisfies Lemma \ref{FK}(iii).
Recall that $\eta$ is a fixed positive constant small enough.
Let $t\in\rn$ satisfy $|t|\in(0,\eta/8)$.
Then, for any $(f,g)\in\Lpwa\times\Lpwb$ and $x\in\rn$, we have
\begin{align}\label{L45}
&\lf[b,T_\eta\r]_1(f,g)(x)-\lf[b,T_\eta\r]_1(f,g)(x+t)\\
&\quad=\int_{\rnn}[b(x)-b(y)]K_\eta(x,y,z)f(y)g(z)\,dy\,dz\notag\\
&\qquad -\int_{\rnn}[b(x+t)-b(y)]K_\eta(x+t,y,z)f(y)g(z)\,dy\,dz\notag\\
&\quad=[b(x)-b(x+t)]\int_{\rnn}K_\eta(x,y,z)f(y)g(z)\,dy\,dz\notag\\
&\qquad +\int_{\rnn}[b(x+t)-b(y)]\lf[K_\eta(x,y,z)-K_\eta(x+t,y,z)\r]f(y)g(z)\,dy\,dz\notag\\
&\quad=:\mathrm{L}_4(x)+\mathrm{L}_5(x).\notag
\end{align}

Observe that we have shown that $K_\eta$ satisfies \eqref{sizeregular}
with the kernel constant independent of $\eta$
[see the proof of $[b,T_\eta]_1\ce$ satisfying Lemma \ref{FK}(i) above].
From this, the mean value theorem
and the boundedness of Calder\'on--Zygmund operators
on weighted Lebesgue spaces (\cite[Corollary 3.9]{LOPTT09AM}),
it follows that, for any $(f,g)\in\Lpwa\times\Lpwb$,
\begin{align}\label{L4ok}
\|\mathrm{L}_4\|_{\Lpw}\ls |t|\|\nabla b\|_{L^\fz}\|f\|_{\Lpwa}\|g\|_{\Lpwb}.
\end{align}

To estimate $\mathrm{L}_5$, we first observe that,
for any $x,\ y,\ z,\ t\in\rn$ with
$|x-y|+|x-z|<\frac\eta4$ and $|t|<\frac\eta8$,
$$\varphi_1\lf(\frac2\eta[|x-y|+|x-z|]\r)=0
=\varphi_1\lf(\frac2\eta[|x+t-y|+|x+t-z|]\r)$$
and hence
\begin{align}\label{K=0=K}
K_\eta(x,y,z)=0=K_\eta(x+t,y,z).
\end{align}
Besides, for any $x,\ y,\ z,\ t\in\rn$ with
$|x-y|+|x-z|\ge\frac\eta4$ and $|t|<\frac\eta{16}$,
we have
$$|t|\le\frac14(|x-y|+|x-z|).$$
This, together with the mean value theorem and \eqref{sizeregular}, implies that
\begin{align}\label{Kx-Kx+t}
\lf|K_\eta(x,y,z)-K_\eta(x+t,y,z) \r|
&=\lf|t\nabla_x K_\eta(\zeta,y,z)\r|
\ls \frac{|t|}{(|\zeta-y|+|\zeta-z|)^{2n+1}}\\
&\ls \frac{|t|}{(|x-y|+|x-z|-2|x-\zeta|)^{2n+1}}\notag\\
&\ls \frac{|t|}{(|x-y|+|x-z|-2|t|)^{2n+1}}
\ls \frac{|t|}{(|x-y|+|x-z|)^{2n+1}}\notag,
\end{align}
where $\nabla_x K_\eta(\cdot,y,z)$ denotes the gradient of
$K_\eta(\cdot,y,z)$ on the first variable with $y$, $z$ fixed,
and $\zeta$ is in the segment connecting $x$ and $x+t$.
By \eqref{L45}, the mean value theorem, \eqref{K=0=K} and \eqref{Kx-Kx+t},
we have, for any $(f,g)\in\Lpwa\times\Lpwb$ and $x\in\rn$,
\begin{align*}
|\mathrm{L}_5(x)|&\ls|t|^2\|\nabla b\|_{L^\fz(\rn)}\int_{|x-y|+|x-z|>\frac\eta4}
  \frac{|f(y)g(z)|}{(|x-y|+|x-z|)^{2n+1}}\,dy\,dz\\
&\ls|t|^2\|\nabla b\|_{L^\fz(\rn)}\sum_{k=0}^\fz\frac1{(2^k\eta)^{2n+1}}
  \int_{2^k\frac\eta4<|x-y|+|x-z|\le2^{k+1}\frac\eta4}f(y)g(z)\,dy\,dz\notag\\
&\ls |t|^2\|\nabla b\|_{L^\fz(\rn)}\sum_{k=0}^\fz\frac1{2^k\eta}
  \fint_{Q(x,2^{k+1}\frac\eta4)}|f(y)|\,dy\fint_{Q(x,2^{k+1}\frac\eta4)}|g(z)|\,dz\notag\\
&\ls \frac{|t|^2}{\eta}\|\nabla b\|_{L^\fz(\rn)} \cm(f,g)(x)\notag
\end{align*}
and hence, by the boundedness of $\cm$
from $\Lpwa\times \Lpwb$ to $\Lpw$ (\cite[Theorem 3.3]{LOPTT09AM}),
we further conclude that, for any $(f,g)\in\Lpwa\times\Lpwb$,
\begin{align}\label{L5ok}
\|\mathrm{L}_5\|\ls |t|^2\|\nabla b\|_{L^\fz}\|f\|_{\Lpwa}\|g\|_{\Lpwb}.
\end{align}
Combining \eqref{L45}, \eqref{L4ok} and \eqref{L5ok}, we conclude that,
for any $b\in B_1(\rn)$ and $(f,g)\in\ce$ bounded,
$$\lim_{|t|\to0}\lf\|\lf[b,T_\eta\r]_1(f,g)(\cdot)-\lf[b,T_\eta\r]_1(f,g)(\cdot+t) \r\|_{\Lpw}=0,$$
which implies that $[b,T_\eta]_1\ce$ also satisfies Lemma \ref{FK}(iii).
Thus, $[b,T_\eta]_1\ce$ satisfies (i), (ii) and (iii) of Lemma \ref{FK} and hence
$[b,T_\eta]_1$ is a compact operator for any $b\in B_1(\rn)$
and $\eta\in(0,\fz)$ small enough.
This finishes the proof of Theorem \ref{compact-thm}.
\end{proof}

\noindent\textbf{Acknowledgements}.
Jin Tao and Dachun Yang would like to express their sincerely thanks to
Professor Loukas Grafakos for his very helpful discussion on the subject of this article.

\bigskip

\noindent Jin Tao, Qingying Xue, Dachun Yang (Corresponding author) and Wen Yuan

\medskip

\noindent Laboratory of Mathematics and Complex Systems
(Ministry of Education of China),
School of Mathematical Sciences, Beijing Normal University,
Beijing 100875, People's Republic of China

\smallskip

\noindent{\it E-mails:} \texttt{jintao@mail.bnu.edu.cn} (J. Tao)

\noindent\phantom{{\it E-mails:}} \texttt{qyxue@bnu.edu.cn} (Q. Xue)

\noindent\phantom{{\it E-mails:}} \texttt{dcyang@bnu.edu.cn} (D. Yang)

\noindent\phantom{{\it E-mails:}} \texttt{wenyuan@bnu.edu.cn} (W. Yuan)

\end{document}